\newif\ifdraft
\newtheorem{theorem}[equation]{Theorem}
\newtheorem*{theorem*}{Theorem}
\newtheorem{lemma}[equation]{Lemma}
\newtheorem*{lemma*}{Lemma}
\newtheorem{corollary}[equation]{Corollary}
\newtheorem*{proposition*}{Proposition}
\theoremstyle{definition}
\newtheorem{definition}[equation]{Definition}
\newtheorem*{definition*}{Definition}
\newtheorem{remark}[equation]{Remark}
\newtheorem{example}[equation]{Example}
\newtheorem*{example*}{Example}
\newtheorem*{problem*}{Problem}
\theoremstyle{plain}
\newcommand{\m}{\mathfrak m}
\newcommand{\C}{\mathcal C}
\newcommand{\E}{\mathcal E}
\renewcommand{\H}{\mathcal H}
\renewcommand{\O}{\mathcal O}
\newcommand{\bL}{{\bf L}}
\newcommand{\bR}{{\bf R}}
\renewcommand{\AA}{\mathbb A}
\newcommand{\CC}{\mathbb C}
\newcommand{\DD}{\mathbb D}
\newcommand{\HH}{\mathbb H}
\newcommand{\PP}{\mathbb P}
\newcommand{\QQ}{\mathbb Q}
\newcommand{\ZZ}{\mathbb Z}
\newcommand{\sE}{\mathscr E}
\newcommand{\sF}{\mathscr F}
\DeclareMathOperator{\Sym}{Sym}
\DeclareMathOperator{\coker}{coker}
\newcommand{\xto}{\xrightarrow} 
\newcommand{\RHom}{{\bf R} \H om}
\newcommand*{\sHom}{\mathscr{H}\kern -.5pt om}
\DeclareMathOperator{\Ext}{Ext}
\newcommand{\sExt}{\E xt} 
\newcommand{\DB}{\underline{\Omega}} 
\DeclareMathOperator{\Gr}{Gr}
\DeclareMathOperator{\DR}{DR}
\DeclareMathOperator{\tors}{tors}
\DeclareMathOperator{\lcdef}{lcdef}
\DeclareMathOperator{\Spec}{Spec}
\DeclareMathOperator{\Pic}{Pic}
\DeclareMathOperator{\lcd}{lcd}
\DeclareMathOperator{\codim}{codim}
\DeclareMathOperator{\depth}{depth}
\newcommand{\theoremref}[1]{\hyperref[#1]{Theorem~\ref*{#1}}}
\newcommand{\lemmaref}[1]{\hyperref[#1]{Lemma~\ref*{#1}}}
\newcommand{\definitionref}[1]{\hyperref[#1]{Definition~\ref*{#1}}}
\newcommand{\propositionref}[1]{\hyperref[#1]{Proposition~\ref*{#1}}}
\newcommand{\conjectureref}[1]{\hyperref[#1]{Conjecture~\ref*{#1}}}
\newcommand{\corollaryref}[1]{\hyperref[#1]{Corollary~\ref*{#1}}}
\newcommand{\exampleref}[1]{\hyperref[#1]{Example~\ref*{#1}}}
\let\old@caption\caption
\renewcommand*{\caption}[1]{%
	\setcounter{figure}{\value{equation}}%
	\stepcounter{equation}%
	\old@caption{#1}\relax%
}
\newcounter{intro}
\newtheorem{intro-conjecture}[intro]{Conjecture}
\newtheorem{intro-corollary}[intro]{Corollary}
\newtheorem{intro-theorem}[intro]{Theorem}
\begin{document}

\title[Du Bois complexes of cones over singular varieties]{Du Bois complexes of cones over singular varieties, local cohomological dimension, and $K$-groups}

\author[M.~Popa]{Mihnea~Popa}
\address{Department of Mathematics, Harvard University, 
1 Oxford Street, Cambridge, MA 02138, USA} 
\email{{\tt mpopa@math.harvard.edu}}

\author[W.~Shen]{Wanchun~Shen}
\address{Department of Mathematics, Harvard University, 
1 Oxford Street, Cambridge, MA 02138, USA} 
\email{{\tt wshen@math.harvard.edu}}

\thanks{MP was partially supported by NSF grant DMS-2040378.}
\date{\today}

\dedicatory{Dedicated to the memory of Lucian B\u adescu, with gratitude.}

\subjclass[2020]{}

\begin{abstract}
We compute the Du Bois complexes of abstract cones over singular varieties, and use this to describe the local cohomological dimension
and the non-positive $K$-groups of such cones. 
\end{abstract}

\maketitle

\makeatletter
\newcommand\@dotsep{4.5}
\def\@tocline#1#2#3#4#5#6#7{\relax
  \ifnum #1>\c@tocdepth 
  \else
    \par \addpenalty\@secpenalty\addvspace{#2}%
    \begingroup \hyphenpenalty\@M
    \@ifempty{#4}{%
      \@tempdima\csname r@tocindent\number#1\endcsname\relax
    }{%
      \@tempdima#4\relax
    }%
    \parindent\z@ \leftskip#3\relax
    \advance\leftskip\@tempdima\relax
    \rightskip\@pnumwidth plus1em \parfillskip-\@pnumwidth
    #5\leavevmode\hskip-\@tempdima #6\relax
    \leaders\hbox{$\m@th
      \mkern \@dotsep mu\hbox{.}\mkern \@dotsep mu$}\hfill
    \hbox to\@pnumwidth{\@tocpagenum{#7}}\par
    \nobreak
    \endgroup
  \fi}
\def\l@section{\@tocline{1}{0pt}{1pc}{}{\bfseries}}
\def\l@subsection{\@tocline{2}{0pt}{25pt}{5pc}{}}
\makeatother



\section{Introduction}
As an undergraduate, the first author was introduced by Lucian B\u adescu to the beautiful world of algebraic geometry. One of his favorite topics was that of Lefschetz-type theorems, in particular Barth's celebrated result for arbitrary smooth subvarieties in projective space; he also often brought up Hartshorne and Ogus' work on local cohomological dimension and its connections to the topology of projective varieties. We dedicate this note on a related circle of ideas to him, in the belief that he would have enjoyed it! 

Concretely, in this paper we compute the Du Bois complexes of cones over arbitrary subvarieties in projective space; the case of smooth varieties can be found in \cite{SVV} and the references therein. We use this for several applications. 
The most important is not a new result, at least in the setting of classical cones, but we believe it nicely illustrates how such Du Bois complexes can be used in an almost algorithmic fashion:  using the  birational characterization of local cohomological dimension provided in \cite{MP-LC}, expressed in an equivalent form in terms of the depth of Du Bois complexes, we give an alternative approach to Ogus' \cite{Ogus} interpretation of the local cohomological dimension at the vertex of the cone over a projective variety $X$ in terms of the topology of $X$. As a new result, we extend this characterization to all abstract cones over $X$. In a different direction, feeding our computation into the main technical result of \cite{CHWW}, we describe the non-positive $K$-groups of abstract cones over a projective variety in terms of its Du Bois complexes, extending the result in \emph{loc. cit.} to singular complex varieties.

Let $X \subseteq \PP^N$ be a complex projective variety of dimension $n$ and codimension $r = N - n$, and let $Z = C(X)  \subseteq \AA^{N+1}$ be the affine cone over $X$. We denote by $ \lcd (Z, \AA^{N+1})$ the local cohomological dimension of $Z$ in $\AA^{N+1}$, and call 
$${\rm lcdef} (Z) : = \lcd (Z, \AA^{N+1}) - r$$
the \emph{local cohomological defect} of $Z$. This invariant depends only on $Z$, and not on the embedding in $\AA^{N+1}$; see Section \ref{scn:lcd} for details. The following is a slight reformulation of Ogus' result \cite[Theorem 4.4]{Ogus} (also recovered more recently in \cite{HP} in the smooth case, and in \cite{RSW} in general):

\begin{intro-theorem}\label{thm:main}
For an integer $c \ge 0$,  the following are equivalent:
	\begin{enumerate}
		\item ${\rm lcdef} (Z)\le c$
		\item ${\rm lcdef} (X)\le c$ and the restriction maps $H^i(\PP^N, \CC) \to H^i(X, \CC)$ are isomorphisms for $i\le n-1 - c$.
\end{enumerate}
In particular:
$${\rm lcdef} (Z) = 0 \iff {\rm lcdef} (X) = 0 ~{\rm and}~ H^i (\PP^N, \CC) \overset{\simeq}{\longrightarrow}  H^i (X, \CC) \,\,{\rm for ~all} \,\,i \le n-1.$$
\end{intro-theorem}

Note that in \cite{Ogus}, part (1) is phrased in terms of the cohomological dimension ${\rm cd} (\PP^N \smallsetminus X)$, but it is well known via a standard argument that $\lcd (Z, \AA^{N+1}) = {\rm cd} (\PP^N \smallsetminus X) + 1$.

The interest in the last equivalence in Theorem \ref{thm:main} stems from the fact that both sides are satisfied when $X$ is a smooth complete intersection in $\PP^N$. It is therefore intimately related to Hartshorne's conjecture on small codimension subvarieties.

\noindent
\emph{Note.} Recall that Barth's theorem (see \cite[Scn.3.2]{Lazarsfeld} for a survey) states that if $X$ is smooth, then we always have
$$H^i (\PP^N, \CC) \overset{\simeq}{\longrightarrow}  H^i (X, \CC) \,\,\,\,\,\,{\rm for ~all} \,\,\,\,i \le n-r.$$
According to Theorem \ref{thm:main}, this is equivalent to the universal bound ${\rm lcdef} (Z) \le r-1$. 

We provide in fact an extension of Ogus' result to any abstract cone over $X$, defined by the choice of ample line bundle $L$, in terms
of the action of $c_1 (L)$ on the singular cohomology of $X$; see Theorem \ref{thm:main-general} for the precise statement.
In our approach, this result, hence also Theorem \ref{thm:main}, follows from two main points. One is a reformulation of the characterization of ${\rm lcdef} (Z)$ in \cite[Theorem E]{MP-LC}, for any complex variety $Z$, in terms of the Du Bois complexes of $Z$:
$${\rm lcdef} (Z) = \dim Z -  \underset{k \ge 0}{\min}~ \{ \depth \DB_Z^k + k \}.$$
See Section \ref{scn:lcd} for details. The second, and the main technical result here, is the computation of the Du Bois complexes of (abstract) cones over arbitrary subvarieties in $\PP^N$, and of their depth. 

\begin{intro-theorem}\label{thm:DB}
Let $X$ be a projective variety, endowed with an ample line bundle $L$. Let
	\[Z=C(X,L)=\Spec \big(\bigoplus_{m\ge 0} H^0(X,L^m)\big)\]
	be the affine cone over $X$ associated to $L$, with cone point $x\in Z$. Then the Du Bois complexes $\DB_Z^k$ can be computed explicitly in terms of the Du Bois-Hodge theory of $X$ and $L$; see Theorem \ref{thm:DB-cones}(1). The same holds for $\depth_x \DB_Z^k$; see Theorem \ref{thm:DB-cones}(2).
\end{intro-theorem}

On a different note, to prove Theorem \ref{thm:main-general} we also need a general result of independent interest, namely a ``dual" Nakano-type vanishing theorem for Du Bois complexes. We establish this in Section \ref{scn:vanishing}, as a  simple application of the Kodaira-Saito vanishing theorem for mixed Hodge modules. 

\begin{intro-theorem}\label{thm:intro-vanishing}
Let $X$ be a complex projective variety of dimension $n$, and $L$ an ample line bundle on $X$. Then 
$$\HH^q (X, \DB_X^p \otimes L^{-1}) = 0 \,\,\,\,\,\,{\rm for ~all} \,\,\,\, p + q <  n - {\rm lcdef}(X).$$
\end{intro-theorem} 

\smallskip

Finally, in a different direction, Theorem \ref{thm:DB} is used in Section \ref{scn:K} to compute the non-positive $K$-groups of cones over a projective variety in terms of its Du Bois complexes, extending for complex varieties the result in \cite{CHWW} to abstract cones over singular varieties; this is simply an appendix to \emph{loc. cit.}, as we are still using the main technical result of that paper.

\begin{intro-corollary}\label{cor:K-groups}
Let $X$ be an $n$-dimensional complex projective variety endowed with an ample line bundle $L$, and let $Z = C(X, L)$ be the affine cone over $X$ associated to $L$. Then:
\begin{enumerate}
	\item $K_0(Z)\simeq \ZZ\oplus \bigoplus_{i=1}^n \bigoplus_{m\ge 1} \HH^i(X, \DB_{X/\QQ}^i\otimes L^m)$;
	\smallskip
	\item $K_{-\ell}(Z)\simeq \bigoplus_{i=0}^{n-\ell}\bigoplus_{m\ge 1} \HH^{\ell +i}(X,\DB_{X/\QQ}^i\otimes L^m)$ for all $\ell \ge 1$.
\end{enumerate}
\end{intro-corollary}

Here $\DB_{X/\QQ}^i$ are the Du Bois complexes over $\QQ$ of the complex variety $X$; see the beginning of Section \ref{scn:K}.
When $X$ is embedded in $\PP^N$, with $L = \O_X(1)$, the method applies to the classical cone $C(X)$ as well. For $l\ge 1$ we have $K_{-l}(C(X))=K_{-l}(C(X, L))$, while for $l=0$ we have
\[K_0(C(X))\simeq \ZZ\oplus \Pic(C(X))\oplus \bigoplus_{i=1}^n \bigoplus_{m\ge 1} \HH^i(X, \DB_{X/\QQ}^i\otimes L^m).\] 
Here $\Pic(C(X))\simeq R^+ / R$, where $R$ is the homogeneous coordinate ring of $X$, and 
$$R^+ \simeq \bigoplus_{m\ge 0} \HH^0(X,\DB_X^0\otimes L^m)$$
is its seminormalization. A few things can also be said about positive $K$-groups, but a precise description can only be achieved in special cases.

\medskip

\noindent
{\bf Acknowledgements.} 
We thank Elden Elmanto, Shiyue Li, Mircea Musta\c t\u a, Sung Gi Park, Anda Tenie and Duc Vo for helpful conversations.

\section{Main results}

\subsection{Preliminaries}
We start by collecting the main technical definitions and facts used in this paper.

\smallskip

\noindent
{\bf Du Bois complexes.}
Let $X$ be a complex algebraic variety. The \emph{filtered de Rham complex} $(\DB_X^\bullet, F)$ is an object in the bounded derived category of filtered differential complexes on $X$, introduced by Du Bois in \cite{DuBois}, following the ideas of Deligne, and intended as a replacement for the standard de Rham complex on smooth varieties.   For each $k \ge 0$, the (shifted) associated graded quotient
\[\DB^k_X : = \Gr^k_F \DB^\bullet_X[k],\]
is an object in $D^b_{\rm coh}(X)$, called the \emph{$k$-th Du Bois complex} of $X$. It follows from definition that
\[\DB_X^k \simeq \bR \epsilon_{\bullet *} \Omega_{X_{\bullet}}^k,\]
where $\epsilon_{\bullet} \colon X_{\bullet}\to X$ is a hyperresolution of $X$. 

Besides \cite{DuBois}, we refer for instance to \cite[Chapter V]{GNPP} or to \cite[Chapter 7.3]{PS} for the construction of hyperresolutions, and for a detailed treatment of Du Bois complexes. Here we only collect the main properties that will be needed in this paper; these will be used freely in the most obvious instances.  

\begin{lemma}\label{general-DB}
(1) We have $\DB_X^k|_U \simeq  \DB_U^k$ for any open subset $U\subset X$, and any morphism $f: X\to Y$ induces a canonical morphism $\DB_Y^k\to \bR f_*\DB_X^k$; see  \cite[Section 3]{DuBois} and \cite[Chapter 7.3]{PS}.

\noindent
(2) For each $k\ge 0$, there is a canonical morphism $\Omega_X^k \to \DB_X^k$, which is an isomorphism if $X$ is smooth; here 
$\Omega_X^k$ are the sheaves of K\"ahler differentials on $X$; see \cite[Section 4.1]{DuBois} or \cite[p.175]{PS}.

\noindent
(3) If $X$ is projective, there exists a Hodge-to-de Rham spectral sequence 
$$E^{p,q}_1 = \HH^q (X, \DB_X^p) \implies H^{p + q} (X, \CC),$$
which degenerates at $E_1$; see \cite[Theorem 4.5(iii)]{DuBois} or \cite[Proposition 7.24]{PS}.

\noindent
(4) Given a discriminant square, i.e. a Cartesian square		
		\[\begin{tikzcd}
			E\ar[r]\ar[d,"g"] &Y\ar[d,"f"] \\
			Z\ar[r] &X
		\end{tikzcd}\]
		such that the horizontal maps are inclusions and  $f: Y \smallsetminus E\to X \smallsetminus Z$ is an isomorphism, then for each $k \ge 0$, we have an exact triangle 
		\[\DB_X^k\to \DB_Z^k\oplus \bR f_*\DB_Y^k\to \bR g_*\DB_E^k\xto{+1} .\]
		See  \cite[Proposition 4.11]{DuBois} or \cite[Example 7.25]{PS}.
			
\noindent	
(5) Let $X_{\bullet}\to X$ be a hyperresolution. Then there is a spectral sequence
		\[E_1^{ij}=H^j(X_i, \Omega_{X_i}^k)\implies \HH^{i+j}(X,\DB_X^k).\]
See \cite[Chapter V, Lemma 3.1]{GNPP}.

\end{lemma}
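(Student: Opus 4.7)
The plan is to treat Lemma \ref{general-DB} as a compilation of standard facts about the filtered de Rham complex, each of which is already recorded in the cited references. Since the paper uses these properties only as a toolbox, my ``proof'' amounts to isolating what is formal from the hyperresolution setup versus what genuinely requires mixed Hodge theory, and then pointing to where each ingredient is established in detail.

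First, I would fix notation: let $\epsilon_{\bullet}\colon X_{\bullet}\to X$ be a hyperresolution in the sense of \cite{GNPP} or \cite{PS}, so that each $X_i$ is smooth and the natural map $\underline{\ZZ}_X\to \bR\epsilon_{\bullet *}\underline{\ZZ}_{X_{\bullet}}$ is a quasi-isomorphism. Parts (1), (2), (4) and (5) are then formal consequences of this setup. For (1), restricting $X_{\bullet}$ to the preimage of an open $U$ yields a hyperresolution of $U$, and functoriality for a morphism $f\colon X\to Y$ uses the cofiltered nature of the category of hyperresolutions to lift $f$ after a suitable refinement. For (2), the canonical map $\Omega_X^k \to \DB_X^k$ is the composite $\Omega_X^k\to \bR\epsilon_{\bullet *}\epsilon_{\bullet}^{*}\Omega_X^k \to \bR\epsilon_{\bullet *}\Omega_{X_{\bullet}}^k$, and when $X$ is smooth one takes the trivial hyperresolution and obtains an isomorphism. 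For (4), one constructs compatible hyperresolutions of $Y$, $Z$ and $E$ that assemble into a hyperresolution of $X$; applying $\bR\epsilon_{\bullet *}\Omega^k$ produces the advertised Mayer--Vietoris triangle. For (5), the spectral sequence is the hypercohomology spectral sequence of the cosimplicial sheaf $\Omega^k_{X_{\bullet}}$, obtained from the double complex computing $\bR\epsilon_{\bullet *}$.

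The only non-formal item is (3). The spectral sequence itself is the one attached to the filtration $F$ on $\DB_X^\bullet$; what is deep is the $E_1$-degeneration, which is the main content of \cite{DuBois}. I would simply invoke it as a black box, noting that its proof proceeds by identifying this spectral sequence with the one defining Deligne's mixed Hodge structure on $H^*(X, \CC)$ via simplicial methods, whereupon strictness of the Hodge filtration forces the degeneration. The main obstacle in a self-contained write-up is thus conceptual rather than computational: the construction of hyperresolutions and their naturality properties are technically intricate, and (3) requires the full strength of mixed Hodge theory. Because the lemma is used only to fix conventions and isolate tools for later sections, no argument beyond the cited references is warranted here.
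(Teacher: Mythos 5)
The paper itself gives no proof of this lemma: it simply records each property with a citation to \cite{DuBois}, \cite{PS}, or \cite{GNPP}, exactly as you do. Your sketches of how parts (1), (2), (4), (5) follow formally from the hyperresolution formalism and of why (3) is the only genuinely deep input are accurate and consistent with the cited sources, so your proposal matches the paper's treatment.
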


\noindent
{\bf Depth of objects in the derived category.}
The notion of depth of a module has a natural extension to complexes. Let $(R, \frak{m})$ be a noetherian local ring endowed with a dualizing complex $\omega_R^\bullet$ (for us this will always be the local ring of $X$ at a closed point), and let $C$ be an element of the bounded derived category of finitely generated $R$-modules. Then one defines $\depth(C )$ in any of the following equivalent ways: 
\begin{enumerate}
\item $\min~  \{i ~|~\Ext^{-i}_R (C,\omega_R^{\bullet})\neq 0\}$;
\item $\min~  \{i ~|~\Ext^i_R(R/\m,C)\neq 0\}$;
\item $\min~  \{i ~|~H^i_{\frak{m}}(C)\neq 0\}$,
\end{enumerate}
with the convention that the depth is $-\infty$ if $C = 0$. The last two properties are studied and shown to be equivalent in \cite{FY}, while
the equivalence with the first follows from local duality.  

If $X$ is a variety and $\C$ is an element in $D^b_{\rm coh} (X)$, we will especially use the first and third interpretation, in the sense that
if $x\in X$ is a closed point, we have 
\begin{equation}\label{eqn:depth}
\depth_x (\C) = \min~  \{i ~|~\sExt^{-i}_{\O_{X, x}} (\C_x,\omega_{X,x}^{\bullet})\neq 0\} = \min~  \{i ~|~\HH^i_{x}(\C)\neq 0\}.
\end{equation}
We also set 
$$\depth(\C) : = \min_{x \in X}~ \depth(\C_x),$$
where the minimum is taken over the closed points of $X$.

\subsection{Local cohomological dimension via Du Bois complexes}\label{scn:lcd}
Let $X$ be a complex variety. A cohomological invariant that is now understood to figure prominently in the study of the Du Bois complexes 
of $X$ is the local cohomological dimension. If $Y$ is a smooth variety containing $X$ (locally), this can be seen as 
$$\lcd (X, Y) := \max ~\{q ~|~ \H^q_X \O_Y \neq 0\},$$
where the sheaf in the parenthesis is the $q$-th local cohomology sheaf of $\O_Y$ along $X$; see \cite{MP-LC} for an extensive discussion. It is in fact known that the sheaves $\H^q_X \O_Y$ are non-zero precisely in the range $\codim_Y X \le q \le \lcd (X, Y)$.

It follows from the main results in \cite{Ogus} or \cite{MP-LC} that $\dim Y - \lcd (X, Y)$ 
depends only on $X$, and not the choice of $Y$; for instance, in the language of \cite[Theorem 2.13]{Ogus}, it is equal to the de Rham depth of $X$. This provides a convenient invariant for our present purposes.

\begin{definition}
The \emph{local cohomological defect} ${\rm lcdef} (X)$ of $X$ is 
$${\rm lcdef} (X) : = \lcd (X, Y) - \codim_Y X.$$
It measures how far $X$ is ``numerically" from being a local complete intersection, and again depends only on $X$, 
and not on the choice of $Y$; indeed, we have 
$${\rm lcdef} (X) = \dim X - \big(\dim Y - \lcd (X, Y)\big).$$ 
This also shows that $\dim X \ge {\rm lcdef} (X) \ge 0$.
\end{definition}

The key result we use here is \cite[Theorem E]{MP-LC}, characterizing the local cohomological dimension in terms of the vanishing of higher direct images of forms with log poles on a log resolution. Via results of Steenbrink, this was equivalently phrased in terms of Du Bois complexes as follows: 

\begin{theorem}[{\cite[Corollary 12.6]{MP-LC}}]\label{thm:LCD}
Let $X$ be a subvariety of a smooth variety $Y$. For any integer $c$ we have 
$$\lcd(X, Y) \le c  \iff \sExt^{j + k + 1}_{\O_Y}(\DB_X^k,\omega_Y) = 0 \,\,\,\,{\rm for ~all~}j \ge c \,\,\,\,{\rm and}\,\,\,\, k \ge 0.$$
or equivalently 
$${\rm lcdef} (X) \le c  \iff  \sExt^{j + k + 1}_{\O_X}(\DB_X^k,\omega_X^{\bullet}) = 0 \,\,\,\,{\rm for ~all~}j \ge c - \dim X \,\,\,\,{\rm and}\,\,\,\, k \ge 0.$$
\end{theorem}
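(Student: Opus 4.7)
The plan is to combine the birational characterisation of $\lcd(X,Y)$ from \cite[Theorem~E]{MP-LC} with a Steenbrink-type description of $\DB_X^k$ via a log resolution of $(Y,X)$, and then to translate between higher direct image vanishings on the resolution and $\sExt$ vanishings on $Y$ using Grothendieck duality.

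First I would fix a log resolution $f\colon \tilde Y\to Y$ of the pair $(Y,X)$, with $E=f^{-1}(X)_{\mathrm{red}}$ a simple normal crossing divisor. By \cite[Theorem~E]{MP-LC}, the condition $\lcd(X,Y)\le c$ is equivalent to the vanishing of a precise family of higher direct images $R^{q}f_{*}\Omega^{p}_{\tilde Y}(\log E)(-E)$ parametrised by $(p,q)$ in an explicit range. On the other hand, Steenbrink's theorem identifies $\DB_X^k$ with the hyperdirect image under $f$ of the quotient of logarithmic differentials supported on $E$; schematically,
\[
\DB_X^k\simeq Rf_{*}\!\left(\Omega^{k}_{\tilde Y}/\Omega^{k}_{\tilde Y}(\log E)(-E)\right).
\]

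Second, I would apply Grothendieck duality for $f$, together with the wedge pairing $\Omega^{p}_{\tilde Y}(\log E)(-E)\otimes \Omega^{\dim Y-p}_{\tilde Y}(\log E)\to\omega_{\tilde Y}$, to convert the higher direct image vanishings of \cite[Theorem~E]{MP-LC} into vanishings of $\sExt^{j+k+1}_{\O_Y}(\DB_X^k,\omega_Y)$ in matching degrees. The distinguished triangle coming from the displayed exact sequence, combined with Grauert--Riemenschneider (which kills the $Rf_{*}\Omega^{k}_{\tilde Y}$ contribution in the relevant range), allows the passage between $Rf_{*}\Omega^{k}_{\tilde Y}(\log E)$ and $\DB_X^k$, yielding the first equivalence.

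Finally, for the equivalent formulation in terms of $\mathrm{lcdef}(X)$ and $\omega_X^{\bullet}$, I would apply Grothendieck duality to the closed embedding $i\colon X\hookrightarrow Y$. Since $\omega_X^{\bullet}\simeq i^{!}\omega_Y[\dim Y]$, one has
\[
\sExt^{a}_{\O_Y}(\DB_X^k,\omega_Y)\simeq i_{*}\sExt^{a-\dim Y}_{\O_X}(\DB_X^k,\omega_X^{\bullet}),
\]
and combining this shift with $\lcd(X,Y)=\mathrm{lcdef}(X)+\dim Y-\dim X$ turns the first equivalence into the second by a direct index shift. The most delicate step is index bookkeeping: I must verify that the $(p,q)$-range from \cite[Theorem~E]{MP-LC} matches exactly the Serre-dual partner of the $(k,j+k+1)$-range in the theorem, and that replacing $Rf_{*}\Omega^{k}_{\tilde Y}(\log E)$ by $\DB_X^k$ does not introduce spurious Ext classes in the relevant bidegrees; everything else is formal.
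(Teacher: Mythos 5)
Your final paragraph --- deducing the second equivalence from the first via Grothendieck duality for $i\colon X\hookrightarrow Y$ and the shift $\lcd(X,Y)=\lcdef(X)+\codim_YX$ --- is exactly what the paper does, and your index bookkeeping there is correct; note that this is the \emph{only} part of the statement the paper actually proves, since the first equivalence is quoted directly from \cite[Corollary 12.6]{MP-LC}.

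Your attempt to rederive the first equivalence, however, has two genuine problems. First, the identification
\[
\DB_X^k\simeq \bR f_{*}\bigl(\Omega^{k}_{\widetilde{Y}}/\Omega^{k}_{\widetilde{Y}}(\log E)(-E)\bigr)
\]
is not correct even schematically: the right-hand side is $\bR f_*\DB_E^k$ (Du Bois' description of the Du Bois complex of the SNC divisor $E$), and $\DB_X^k$ differs from it by the descent triangle $\Omega_Y^k\to \DB_X^k\oplus \bR f_*\Omega^k_{\widetilde{Y}}\to \bR f_*\DB_E^k\xto{+1}$ of Lemma \ref{general-DB}(4). Second, and more seriously, Grauert--Riemenschneider vanishing applies only to $\omega_{\widetilde{Y}}=\Omega^{\dim Y}_{\widetilde{Y}}$; the higher direct images $R^qf_*\Omega^k_{\widetilde{Y}}$ for intermediate $k$ are in general nonzero, so your proposed mechanism for discarding the $\bR f_*\Omega^k_{\widetilde{Y}}$ contribution fails. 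What actually happens is that this term cancels, via the octahedral axiom, against the same term in the triangle $\bR f_*\Omega^k_{\widetilde{Y}}(\log E)(-E)\to \bR f_*\Omega^k_{\widetilde{Y}}\to \bR f_*\DB_E^k\xto{+1}$, leaving only the locally free sheaf $\Omega_Y^k$, which contributes no $\sExt^{>0}_{\O_Y}(-,\omega_Y)$. It is this cancellation, followed by relative duality for $f$ (which converts $\Omega^{k}_{\widetilde{Y}}(\log E)(-E)$ into $\Omega^{\dim Y-k}_{\widetilde{Y}}(\log E)$ and matches the vanishing range of \cite[Theorem E]{MP-LC}), that yields \cite[Corollary 12.6]{MP-LC}; as written, your sketch would not close.
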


The second equivalent statement is simply obtained by translating into the language introduced here, and using Grothendieck duality for the inclusion $X \hookrightarrow Y$. The fundamental consequence is the following:

\begin{corollary}\label{cor:depth-DB-complex}
We have the identity 
$${\rm lcdef} (X) = \dim X -  \underset{k \ge 0}{\min}~ \{ \depth \DB_X^k + k \}.$$
\end{corollary}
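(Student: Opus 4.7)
The plan is to extract this identity from Theorem \ref{thm:LCD} by a direct bookkeeping translation of the $\sExt$-vanishing conditions into depth inequalities. The first ingredient is the depth formula recorded in (\ref{eqn:depth}): for any $\C\in D^b_{\rm coh}(X)$ and any closed point $x\in X$,
$$\depth_x(\C) = \min\{i : \sExt^{-i}_{\O_{X,x}}(\C_x,\omega_{X,x}^{\bullet})\neq 0\},$$
and then $\depth(\C) = \min_x \depth_x(\C)$. Taking complements, this is equivalent to saying that, for any integer $t$,
$$\depth(\C) \ge t \iff \sExt^{m}_{\O_X}(\C,\omega_X^{\bullet}) = 0 \text{ for all } m \ge 1-t.$$

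Next I would apply this to $\C = \DB_X^k$ and compare with Theorem \ref{thm:LCD}. Setting $m = j+k+1$, the condition in Theorem \ref{thm:LCD} reads: for each fixed $k\ge 0$,
$$\sExt^{m}_{\O_X}(\DB_X^k,\omega_X^{\bullet}) = 0 \text{ for all } m \ge c - \dim X + k + 1.$$
By the depth reformulation above, this is exactly the statement $\depth \DB_X^k \ge \dim X - c - k$, equivalently $\depth \DB_X^k + k \ge \dim X - c$. Since Theorem \ref{thm:LCD} requires this to hold simultaneously for all $k\ge 0$, we obtain
$${\rm lcdef}(X) \le c \iff \min_{k\ge 0}\{\depth \DB_X^k + k\} \ge \dim X - c.$$

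Both sides are monotone in $c$, so taking the smallest $c$ for which each side holds gives the claimed equality ${\rm lcdef}(X) = \dim X - \min_{k\ge 0}\{\depth \DB_X^k + k\}$. There is no serious obstacle: the only conceptual point is recognizing that the double-indexed vanishing range in Theorem \ref{thm:LCD} decouples into a separate depth condition for each $\DB_X^k$, with the index $k$ itself contributing additively to the threshold. Everything else is a change of variables $m = j+k+1$ and an application of the local-duality characterization of depth.
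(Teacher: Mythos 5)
Your proposal is correct and takes essentially the same approach as the paper, whose entire proof is the one-line remark that the identity follows from Theorem \ref{thm:LCD} together with ($\ref{eqn:depth}$); you have simply made explicit the change of variables $m=j+k+1$ and the local-duality translation of the $\sExt$-vanishing range into the depth inequality $\depth\DB_X^k+k\ge\dim X-c$ for each $k$.
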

\begin{proof}
Using Theorem \ref{thm:LCD}, the identity follows from ($\ref{eqn:depth}$). 
\end{proof}

This formula opens the door to the study of ${\rm lcdef} (X)$ based on objects in the bounded derived category of coherent sheaves, providing an alternative 
to the topological approach in \cite{Ogus} (or in \cite{RSW} and \cite{BBLSZ}).

\begin{example}[{\bf Varieties with ${\rm lcdef}(X)= 0$}]\label{def=0}
An important point is to recognize those varieties that behave numerically like local complete intersections, without necessarily being so. Here this is encoded in 
the condition ${\rm lcdef}(X)= 0$, or equivalently $\lcd (X, Y) = \codim_Y X$ in any embedding. Besides local complete intersections,  this is known to hold for instance
when $X$ has quotient singularities \cite[Corollary 11.22]{MP-LC}, when $X$ is affine with Cohen-Macaulay Stanley-Reisner coordinate algebra 
\cite[Corollary 11.26]{MP-LC}, when $X$ is an arbitrary Cohen-Macaulay surface or threefold by \cite[Remark p.338-339]{Ogus} and 
\cite[Corollary 2.8]{DT} respectively, and when $X$ is a Cohen-Macaulay fourfold whose local analytic Picard groups are torsion \cite[Theorem 1.3]{DT}.  
\end{example}

\subsection{Du Bois complexes of cones over singular varieties}
The set-up for this entire section and the next is the following: $X$ is a projective variety of dimension $n$, endowed with an ample line bundle $L$, and 
	\[Z=C(X,L)=\Spec \big(\bigoplus_{m\ge 0} H^0(X,L^m)\big)\] 
	is the affine cone over $X$ associated to $L$, with cone point $x\in Z$. 
		
Our first goal is to describe the Du Bois complexes $\DB_Z^k$, generalizing the result obtained in \cite[Appendix A]{SVV} (and earlier in different language in \cite{CHWW}) when $X$ is smooth.  More precisely, noting that $Z$ is affine, we describe the global sections of their cohomology sheaves. Our second goal is to use this description in order to compute their depth at the cone point. We combine all of this in the following:

\begin{theorem}\label{thm:DB-cones}
	With the notation above:
	
	\noindent 
(1) The Du Bois complexes $\DB_Z^k$ are given by 
\[\Gamma (Z, \H^0\DB_Z^0) \simeq \CC\oplus \bigoplus_{m\ge 1}\HH^0(X, \DB_X^0\otimes L^m) \simeq \CC\oplus \bigoplus_{m\ge 1} H^0(X, \H^0\DB_X^0\otimes L^m),\]
	\[\Gamma (Z, \H^i\DB_Z^0) \simeq \bigoplus_{m\ge 1}\HH^i(X, \DB_X^0\otimes L^m) \text{ for $i\ge 1$},\]
	and for $k\ge 1$,
	\[\Gamma (Z, \H^i\DB_Z^k) \cong \bigoplus_{m\ge 1}\HH^i(X,\DB_X^k\otimes L^m)\oplus \bigoplus_{m\ge 1}\HH^i(X, \DB_X^{k-1}\otimes L^m).\]
	
	\noindent 
(2)  We have $\depth_x \DB_Z^k\ge 1$ for every $k\ge 0$. Moreover, if $d\ge 1$ is an  integer:
 
 \smallskip
 \noindent
 (i) ~$\depth_x \DB_Z^0>d$ if and only if
	\begin{itemize}
		\item $\HH^0(X, \DB_X^0\otimes L^m)=0$ for all $m\le -1$;
		\item $\HH^i(X, \DB_X^0\otimes L^m)=0$ for all $m\le 0$ and $1\le i\le d-1$.
	\end{itemize}
	
\noindent
(ii)  If $k\ge 1$, then $\depth_x \DB_Z^k >d$ if and only if
	\begin{itemize}
		\item $\HH^i(X, \DB_X^k\otimes L^m)=\HH^i(X, \DB_X^{k-1}\otimes L^m)=0$ for $m\le -1$ and $0\le i\le d-1$;
		\item $\HH^0(X, \DB_X^k)=0$;
		\item The map $\HH^{i}(X, \DB_X^{k-1})\to \HH^{i+1}(X, \DB_X^k)$ is an isomorphism for $0\le i\le d-2$ and injective for $i=d-1$. 
	\end{itemize}

\end{theorem}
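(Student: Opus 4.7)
The core approach for both parts is to analyze $Z$ via its blowup $\pi: \tilde Z \to Z$ at the cone point $x$: the exceptional divisor is the zero section $X\hookrightarrow \tilde Z$, and $\tilde Z$ is the total space $\mathrm{Tot}(L^{-1})$ of $L^{-1}$ over $X$, hence carries a $\CC^*$-action (scaling fibers) whose weight decomposition organizes all the relevant cohomology. The punctured cone $U = Z\setminus\{x\}\cong \tilde Z\setminus X$ inherits a $\CC^*$-bundle structure over $X$.

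For Part (1), the square $\{X\hookrightarrow\tilde Z,\ \{x\}\hookrightarrow Z\}$ is a discriminant square, so Lemma \ref{general-DB}(4) yields
\[\DB^k_Z\to \DB^k_{\{x\}}\oplus \bR\pi_*\DB^k_{\tilde Z}\to \DB^k_X\xrightarrow{+1}.\]
To evaluate $\bR\pi_*\DB^k_{\tilde Z}$, we pull a smooth hyperresolution $\epsilon_\bullet:X_\bullet\to X$ back along $q:\tilde Z\to X$ to obtain $\tilde Z_\bullet=\mathrm{Tot}(\epsilon_\bullet^*L^{-1})\to \tilde Z$. On each smooth $\tilde Z_i$, the $k$-th exterior power of the cotangent sequence $0\to q_i^*\Omega^1_{X_i}\to\Omega^1_{\tilde Z_i}\to q_i^*L_i\to 0$ is $\CC^*$-equivariant, and its pushforward to $X_i$ has weight-$w$ piece equal to $\Omega^k_{X_i}$ for $w=0$ and an extension of $\Omega^{k-1}_{X_i}\otimes L_i^w$ by $\Omega^k_{X_i}\otimes L_i^w$ for $w\ge 1$. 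Assembling via the simplicial spectral sequence of Lemma \ref{general-DB}(5) and checking that the internal connecting maps within each weight vanish (via Hodge-to-de Rham degeneration on each $X_i$) produces
\[\HH^i(\tilde Z,\DB^k_{\tilde Z}) \cong \HH^i(X,\DB^k_X)\oplus \bigoplus_{w\ge 1}\big[\HH^i(X,\DB^k_X\otimes L^w)\oplus \HH^i(X,\DB^{k-1}_X\otimes L^w)\big],\]
with the restriction to $\HH^i(X,\DB^k_X)$ corresponding to projection onto the weight-$0$ summand. The displayed triangle collapses, for $k\ge 1$, to a short exact sequence identifying $\HH^i(Z,\DB^k_Z)$ with the direct sum over $w\ge 1$; for $k=0$ the factor $\CC=\DB^0_{\{x\}}$ contributes the extra $\CC$ at $\HH^0$. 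Since $Z$ is affine, $\HH^i(Z,\DB^k_Z)=\Gamma(Z,\H^i\DB^k_Z)$, giving (1).

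For Part (2), we apply the local cohomology triangle $\bR\Gamma_x\DB^k_Z\to\bR\Gamma(Z,\DB^k_Z)\to\bR\Gamma(U,\DB^k_U)$ and study it weight by weight. The computation of Part (1) adapts to $U$ with weights in all of $\ZZ$: for $w\neq 0$ the sequences split as before, and the restriction $\HH^i(Z,\DB^k_Z)_w\to\HH^i(U,\DB^k_U)_w$ is an isomorphism for $w\ge 1$ and injective for $w\le -1$ with cokernel $\HH^i(X,\DB^k_X\otimes L^w)\oplus \HH^i(X,\DB^{k-1}_X\otimes L^w)$; the vanishing of this cokernel for $i\le d-1$ is exactly condition (a) of case (ii), whose $k=0$ analogue produces (i). The key new phenomenon is at $w=0$: the SES controlling $(p_*\DB^k_U)_0$ has extension class equal to the Atiyah class $c_1(L)\in \HH^1(X,\DB^1_X)$, giving a Gysin-type exact sequence
\[\HH^{i-1}(X,\DB^{k-1}_X)\xrightarrow{c_1(L)\cup}\HH^i(X,\DB^k_X)\to \HH^i(U,\DB^k_U)_0\to \HH^i(X,\DB^{k-1}_X)\xrightarrow{c_1(L)\cup}\HH^{i+1}(X,\DB^k_X).\]
Since $\HH^i(Z,\DB^k_Z)_0=0$ for $k\ge 1$, this yields $\HH^i_x(\DB^k_Z)|_0\cong\HH^{i-1}(U,\DB^k_U)_0$. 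Demanding this piece to vanish for all $i\le d$ is then equivalent, via the Gysin sequence, to the conditions that $\HH^0(X,\DB^k_X)=0$ and that $c_1(L)\cup:\HH^i(X,\DB^{k-1}_X)\to \HH^{i+1}(X,\DB^k_X)$ is an isomorphism for $0\le i\le d-2$ and injective for $i=d-1$---precisely the theorem's conditions (b) and (c).

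The main obstacle will be identifying the extension class at weight zero with $c_1(L)$ in the singular setting; we reduce this to the smooth case via the hyperresolution, where it follows from the standard Atiyah-class description. The weight-$w$ splittings for $w\neq 0$ also require careful verification; these follow from a $\CC^*$-equivariant degeneration argument analogous to the one in Part (1).
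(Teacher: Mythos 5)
Your proposal follows essentially the same route as the paper: the discriminant square for the blow-up at the vertex, the weight (i.e.\ $m$-graded) decomposition of the pushforward of $\DB^k_{\widetilde Z}$ and $\DB^k_U$ along the projection to $X$ computed on a hyperresolution, the identification of the weight-zero connecting map with cup product by $c_1(L)$, and the local cohomology long exact sequence for $\{x\}\subset Z$; all the resulting conditions match. The one point to repair is your justification for the vanishing of the connecting maps in nonzero weights: Hodge-to-de Rham degeneration on the $X_i$ concerns untwisted forms and cannot be invoked for the weight-$w$ pieces with $w\neq 0$ (indeed, the same reasoning applied at $w=0$ would wrongly kill the $c_1(L)$ map). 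The correct argument, which is what the paper uses, is the explicit splitting $\omega\mapsto \tfrac{1}{w}\,d(\omega t^w)$ of the sheaf-level extension $0\to \Omega^k_{X_i}\otimes L^w\to (\pi_*\Omega^k_{\widetilde Z_i})_w\to \Omega^{k-1}_{X_i}\otimes L^w\to 0$, which is compatible with the simplicial maps of the hyperresolution; with that substitution your argument goes through as written.
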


In this section we focus on the proof of Theorem \ref{thm:DB-cones}(1), while (2) will be proved in the next.
To this end, we consider the blow-up square
\[\begin{tikzcd}
	E\simeq X & {\widetilde{Z}} = \mathbf{Spec}_X \big(\bigoplus_{m\ge 0} L^m \big) \\
	x & Z
	\arrow["i", from=1-1, to=1-2]
	\arrow["f"', from=1-1, to=2-1]
	\arrow[from=2-1, to=2-2]
	\arrow["f", from=1-2, to=2-2]
\end{tikzcd}\]
obtained by blowing up $Z$ at the cone point. We denote
$$U := Z \smallsetminus \{x\} = \widetilde{Z} \smallsetminus E.$$

Using Lemma \ref{general-DB}(4), we have the following exact triangles:
\[\DB_Z^0\to \bR f_*\DB_{\widetilde{Z}}^0\oplus \mathcal{O}_{x} \to \bR f_*\DB_E^0\xto{+1}\]
and 
\[\DB_Z^k\to \bR f_*\DB_{\widetilde{Z}}^k\to \bR f_*\DB_E^k\xto{+1}\]
for $k\ge 1$. Since $\widetilde{Z}$ is an $\AA^1$-bundle over $E \simeq X$, these exact triangles are split:
\begin{lemma}
	With the notation above, for each $k \ge 0$ there exists a ``pullback" map
	\[\bR f_*\DB_E^k\to \bR f_*\DB^k_{\widetilde{Z}}\]
	inverse to the natural restriction map $\bR f_*\DB^k_{\widetilde{Z}}\to \bR f_*\DB_E^k$.
\end{lemma}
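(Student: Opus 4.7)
The key geometric fact is that $\pi:\widetilde{Z}\to X$ is the total space of $L^{\vee}$, hence a smooth, flat, affine morphism, with zero section $i:E\hookrightarrow \widetilde{Z}$ satisfying $\pi\circ i=\mathrm{id}_X$ under the identification $E\simeq X$. So a differential form pulled back from $X$ and then restricted along the zero section recovers itself; the plan is to promote this classical observation to a splitting on Du Bois complexes and then push it forward via $f$.

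Choose a hyperresolution $\epsilon_\bullet:X_\bullet\to X$. Since $\pi$ is smooth, the base change $\widetilde{\epsilon}_\bullet:\widetilde{Z}_\bullet:=\widetilde{Z}\times_X X_\bullet\to \widetilde{Z}$ is a hyperresolution of $\widetilde{Z}$, and its pullback along $i$ is the hyperresolution $X_\bullet\simeq E_\bullet\to E$. Thus $\DB^k_{\widetilde{Z}}=\bR\widetilde{\epsilon}_*\Omega^k_{\widetilde{Z}_\bullet}$ and $\DB^k_E=\bR\epsilon_*\Omega^k_{X_\bullet}$ by Lemma \ref{general-DB}. At each simplicial level $j$, $\pi_j:\widetilde{Z}_j\to X_j$ is a line bundle between smooth varieties, and the classical pullback $\pi_j^*:\Omega^k_{X_j}\to \pi_{j*}\Omega^k_{\widetilde{Z}_j}$ is right inverse to the zero-section restriction $\pi_{j*}\Omega^k_{\widetilde{Z}_j}\to \Omega^k_{X_j}$, simply because $\pi_j\circ i_j=\mathrm{id}$. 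These splittings are natural in the simplicial index, so they assemble into a section $\DB^k_X\to \bR\pi_*\DB^k_{\widetilde{Z}}$ of the natural restriction $\bR\pi_*\DB^k_{\widetilde{Z}}\to \DB^k_E\simeq \DB^k_X$.

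Next, using $\bL\pi^*=\pi^*$ and $\bR\pi_*=\pi_*$ (flatness and affineness of $\pi$), the counit $\pi^*\pi_*\DB^k_{\widetilde{Z}}\to \DB^k_{\widetilde{Z}}$, and the identity $\bL i^*\pi^*=\mathrm{id}$ coming from $\pi\circ i=\mathrm{id}$, one transports the preceding section to a morphism $\bR i_*\DB^k_E\to \DB^k_{\widetilde{Z}}$ in $D^b_{\mathrm{coh}}(\widetilde{Z})$ splitting the natural restriction $\DB^k_{\widetilde{Z}}\to \bR i_*\DB^k_E$; at each simplicial level the required splitting identity is the trivial $i_j^*\pi_j^*=\mathrm{id}$. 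Applying $\bR f_*$ then yields the desired pullback map $\bR f_*\DB^k_E\to \bR f_*\DB^k_{\widetilde{Z}}$ inverse to the natural restriction.

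I expect the main technical obstacle to be this assembly step, namely producing a genuine morphism in $D^b_{\mathrm{coh}}(\widetilde{Z})$ from the level-wise splittings of classical differentials, and checking that the translation $\DB^k_E\to \bL i^*\DB^k_{\widetilde{Z}}$ versus $\bR i_*\DB^k_E\to \DB^k_{\widetilde{Z}}$ retains the splitting identity at the derived level. The cleanest route is to work entirely on the hyperresolution, where $\pi_j\circ i_j=\mathrm{id}$ forces the splitting strictly, and then to transport the resulting simplicial datum to Du Bois complexes by applying $\bR\widetilde{\epsilon}_*$, $\bR\pi_*$, and $\bR f_*$ in turn.
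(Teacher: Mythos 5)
Your proposal is correct and follows essentially the same route as the paper's own proof: the paper likewise takes the functorial pullback $\DB_X^k\to\pi_*\DB^k_{\widetilde{Z}}$, applies $\bL i^*$, composes with the counit $\pi^*\pi_*\to \mathrm{id}$, transposes by adjunction to a map $\bR i_*\DB_E^k\to\DB^k_{\widetilde{Z}}$, pushes forward by $f$, and verifies that the composite with restriction is the identity because every map in the construction is the identity on the level of the hyperresolution. Your level-wise construction on $\widetilde{Z}_\bullet=\widetilde{Z}\times_X X_\bullet$ is exactly the mechanism the paper has in mind, so there is no gap.
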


As a consequence, after passing to hypercohomology, the exact triangles above lead to direct sum decompositions:
\begin{equation}\label{eqn:blow-up1}
\HH^0(\widetilde{Z}, \DB^0_{\widetilde{Z}})\oplus \CC\simeq  \HH^0(Z, \DB_Z^0) \oplus \HH^0(E, \DB_E^0)
\end{equation}
and
\begin{equation}\label{eqn:blow-up2}
 \HH^i(\widetilde{Z}, \DB^k_{\widetilde{Z}}) \simeq \HH^i(Z, \DB_Z^k)\oplus \HH^i(E, \DB_E^k)
 \end{equation}
for $i\ge 0$ and $k\ge 1$. Moreover, the summands $ \HH^i(E, \DB_E^k)$ can be identified with $\HH^i(X, \DB_X^k)$, via the zero section of $\pi$.
We next  compute $\pi_*\DB^k_{\widetilde{Z}}$, via the projection $\pi\colon \widetilde{Z} \to X$.

\begin{lemma}\label{lemma:DB-complex-cone-with-cone-point}
	With the notation above, we have
	\[\pi_*\DB^0_{\widetilde{Z}} \simeq \bigoplus_{m\ge 0} \DB_X^0\otimes L^m,\]
	and for each $k\ge 1$, a split exact triangle
	\[\bigoplus_{m\ge 0} \DB_X^k\otimes L^m\to \pi_*\DB^k_{\widetilde{Z}}\to \bigoplus_{m\ge 1}\DB_X^{k-1}\otimes L^m\xto{+1}.\]
	\begin{proof}
		Recall that $\widetilde{Z}=\mathbf{Spec}_X (\Sym L)$. Given a hyperresolution $\epsilon_{\bullet}: X_{\bullet}\to X$, it induces a hyperresolution $\widetilde{Z}_{\bullet}=\mathbf{Spec}_{X_{\bullet}} (\Sym \epsilon_{\bullet}^*L)$ of $\widetilde{Z}$. For each $\widetilde{Z_i}\to X_i$, we have
		\[\pi_*\O_{\widetilde{Z_i}} \simeq \bigoplus_{m\ge 0} L^m\]
		and for $k\ge 1$, a split exact sequence
		\[0\to \bigoplus_{m\ge 0}\Omega_{X_i}^p\otimes \epsilon_i^*L^m\to \pi_*\Omega^p_{\widetilde{Z}_i}\to \bigoplus_{m\ge 1}\Omega_{X_i}^{p-1}\otimes \epsilon_i^*L^m\to 0.\]
		Since the splitting given by $d$ is compatible with the maps in the hyperresolution, we get the result by pushing these forward to $X$, and using the projection formula.
	\end{proof}
\end{lemma}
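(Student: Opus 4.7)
My plan is to compute $\pi_\ast \DB^k_{\widetilde Z}$ by reducing to the smooth setting via a hyperresolution that is compatible with the affine bundle structure $\pi\colon \widetilde Z \to X$. Since $\widetilde Z = \mathbf{Spec}_X(\Sym L)$, any hyperresolution $\epsilon_\bullet \colon X_\bullet \to X$ induces an augmented simplicial scheme $\widetilde Z_\bullet := \mathbf{Spec}_{X_\bullet}(\Sym \epsilon_\bullet^\ast L) \to \widetilde Z$. Because the pullback of a hyperresolution along a smooth (in fact affine and flat) map remains a hyperresolution, $\widetilde Z_\bullet \to \widetilde Z$ is a hyperresolution, and therefore $\DB_{\widetilde Z}^k \simeq \bR \epsilon_{\bullet \ast} \Omega^k_{\widetilde Z_\bullet}$. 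This lets me reduce the problem to a levelwise calculation on the smooth $X_i$'s, then reassemble simplicially.

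At each level, $\pi_i \colon \widetilde Z_i \to X_i$ is the total space of the line bundle dual to $\epsilon_i^\ast L$ (with the stated sign convention), so $\pi_{i\ast}\O_{\widetilde Z_i} = \bigoplus_{m\ge 0} \epsilon_i^\ast L^m$, while the relative cotangent sheaf is $\Omega^1_{\widetilde Z_i / X_i} \simeq \pi_i^\ast \epsilon_i^\ast L$. The relative cotangent short exact sequence and its wedge powers give
\[0 \to \pi_i^\ast \Omega^k_{X_i} \to \Omega^k_{\widetilde Z_i} \to \pi_i^\ast \Omega^{k-1}_{X_i}\otimes \pi_i^\ast \epsilon_i^\ast L \to 0.\]
Applying $\pi_{i\ast}$ and the projection formula (using that $\pi_i$ is affine, so there are no higher direct images) gives precisely the split sequence displayed in the lemma at level $i$. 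Note the sequence is already split on each $\widetilde Z_i$: the splitting is provided by the canonical decomposition of differential forms on the total space of a line bundle according to the weights of the natural $\GG_m$-action, which corresponds to the grading of $\Sym L$ by degree.

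Because this weight decomposition is intrinsic, the chosen splittings are functorial in $X$ and therefore compatible with the face and degeneracy maps of $X_\bullet$. Pushing forward the resulting split simplicial short exact sequence via $\bR \epsilon_{\bullet \ast}$ and the (outer) projection $\widetilde Z \to X$ yields the split triangle in the statement. For $k=0$ the second term in the sequence vanishes, so one simply gets $\pi_\ast \DB^0_{\widetilde Z} \simeq \bigoplus_{m\ge 0}\DB_X^0 \otimes L^m$.

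The main technical point I expect to be subtle is the verification that the pulled-back simplicial scheme $\widetilde Z_\bullet$ is a genuine hyperresolution of $\widetilde Z$, and that the weight splitting is manifestly compatible with the simplicial structure, so that one truly obtains a split triangle after $\bR \epsilon_{\bullet \ast}$. Everything else reduces to the standard cotangent sequence for a line bundle's total space and the projection formula.
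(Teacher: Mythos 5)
Your overall architecture is the same as the paper's: pass to the induced hyperresolution $\widetilde Z_\bullet=\mathbf{Spec}_{X_\bullet}(\Sym \epsilon_\bullet^*L)$, compute levelwise on each affine bundle $\pi_i\colon \widetilde Z_i\to X_i$ via the relative cotangent sequence, and push forward using the projection formula. The identification of the sub and quotient terms is correct, and the point you flag about $\widetilde Z_\bullet$ being a genuine hyperresolution is handled the same way in the paper (it is asserted; smooth affine base change preserves cohomological descent).

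The gap is in your splitting mechanism. The $\GG_m$-weight decomposition cannot split the sequence: both $\bigoplus_{m\ge 0}\Omega_{X_i}^k\otimes\epsilon_i^*L^m$ and $\bigoplus_{m\ge 1}\Omega_{X_i}^{k-1}\otimes\epsilon_i^*L^m$ are already weight-graded, so the weight decomposition only refines the extension into its graded pieces
\[0\to \Omega_{X_i}^k\otimes\epsilon_i^*L^m\to (\pi_{i*}\Omega^k_{\widetilde Z_i})_m\to \Omega_{X_i}^{k-1}\otimes\epsilon_i^*L^m\to 0,\]
and these are in general \emph{not} split as sequences of $\O_{X_i}$-modules: a transition-function computation identifies the extension class with (the image of) $c_1(\epsilon_i^*L)$, essentially the Atiyah class. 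Concretely, for $X=\PP^1$, $L=\O_{\PP^1}(1)$, $k=m=1$, the weight-one piece of $\pi_*\Omega^1_{\widetilde Z}$ is globally generated by $dx,dy$ and has degree $0$, hence is $\O\oplus\O$ rather than $\O(-1)\oplus\O(1)$. So no $\O$-linear splitting exists, and the same applies to the unpushed sequence on $\widetilde Z_i$. What actually holds, and what the argument needs downstream, is that the connecting morphisms in the hypercohomology long exact sequence vanish: in each weight $m\neq 0$ the assignment $\beta\otimes t^m\mapsto \tfrac1m\, d(\beta t^m)$ gives a $\CC$-linear (not $\O_X$-linear) section that commutes with the simplicial maps, and this kills the boundary maps on every weight-$m\neq 0$ summand. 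This is exactly the computation of Lemma \ref{lemma:zero-connecting-morphism}, which also shows that on the weight-$0$ summand the connecting map is cup product with $c_1(L)$ --- nontrivial for $U=\widetilde Z\smallsetminus E$, but irrelevant here since the quotient only involves $m\ge 1$. You should replace the weight-decomposition splitting by this $d$-splitting, and interpret the conclusion accordingly (as a direct sum decomposition after passing to hypercohomology, which is all that is used).
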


We are now ready to deduce the result stated at the beginning of the section:

\begin{proof}[{Proof of Theorem \ref{thm:DB-cones}(1)}]
	Putting together ($\ref{eqn:blow-up1}$), ($\ref{eqn:blow-up2}$), and Lemma \ref{lemma:DB-complex-cone-with-cone-point}, we obtain
	\[\HH^0(Z, \DB_Z^0)\simeq \CC\oplus \bigoplus_{m\ge 1} \HH^0(X, \DB_X^0\otimes L^m)\]
	\[\HH^i(Z, \DB_Z^0) \simeq \bigoplus_{m\ge 1} \HH^i(X, \DB_X^0\otimes L^m)\,\,\,\,{\rm for} \,\,\,\,i \ge 1,\]
	and for each $i$ and $k\ge 1$, a split exact sequence
	\[0\to \bigoplus_{m\ge 1} \HH^i(\DB_X^k\otimes L^m)\to \HH^i(Z, \DB^k_Z)\to \bigoplus_{m\ge 1}\HH^i(\DB_X^{k-1}\otimes L^m)\to 0.\]
	
	Since $Z$ is affine, we have $\Gamma (Z, \H^i \DB_Z^k) = \HH^{i}(Z, \DB_Z^k)$, from which the result is immediate.
\end{proof}

\medskip

It is also worth noting the following quick consequence of Theorem \ref{thm:DB-cones}(1), presumably known to experts:

\begin{corollary}
	The cone $Z=C(X,L)$ is seminormal if and only if $X$ is seminormal.
\end{corollary}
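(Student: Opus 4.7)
The plan is to apply the standard characterization of seminormality in terms of the Du Bois complex: a reduced complex variety $Y$ is seminormal iff the canonical morphism $\O_Y\to\H^0\DB_Y^0$ is an isomorphism, and more generally $\H^0\DB_Y^0\simeq \nu_*\O_{Y^{sn}}$, where $\nu\colon Y^{sn}\to Y$ is the seminormalization. Since $Z$ is affine, seminormality of $Z$ is equivalent to the induced map on global sections $R\to \Gamma(Z,\H^0\DB_Z^0)$ being an isomorphism, where $R=\bigoplus_{m\ge 0}H^0(X,L^m)$.

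I would then combine \theoremref{thm:DB-cones}(1) with the projection formula for the finite morphism $\nu\colon X^{sn}\to X$ to rewrite
\[\Gamma(Z,\H^0\DB_Z^0)\simeq \bigoplus_{m\ge 0}H^0(X^{sn},(\nu^*L)^m),\]
where I am assuming $X$ is connected so that $H^0(X,\O_X)=\CC$ matches the $\CC$-summand produced by the theorem (the disconnected case reduces to this by passing to connected components, on which the cone construction splits). Under this identification, the natural map $R\to \Gamma(Z,\H^0\DB_Z^0)$ is the direct sum of the pullback maps $H^0(X,L^m)\to H^0(X^{sn},(\nu^*L)^m)$, one for each $m\ge 0$.

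If $X$ is seminormal, then $\nu$ is an isomorphism, so every graded piece is an isomorphism and $Z$ is seminormal. Conversely, setting $Q:=\nu_*\O_{X^{sn}}/\O_X$ and twisting the short exact sequence $0\to\O_X\to \nu_*\O_{X^{sn}}\to Q\to 0$ by $L^m$, Serre vanishing gives $H^1(X,L^m)=0$ for $m\gg 0$, and the assumed equality of graded pieces forces $H^0(X,Q\otimes L^m)=0$ for all $m\gg 0$. Ampleness of $L$ then yields $Q=0$, i.e.\ $X$ is seminormal.

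The argument is essentially tautological once \theoremref{thm:DB-cones}(1) is available, so I do not foresee any serious obstacle. The main conceptual input is the identification $\H^0\DB_X^0\simeq\nu_*\O_{X^{sn}}$, which translates the formula into a comparison of the section rings $R(X,L)$ and $R(X^{sn},\nu^*L)$; the only quantitative ingredient is Serre vanishing for the ample line bundle $L$.
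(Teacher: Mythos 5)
Your argument is correct and follows essentially the same route as the paper: both reduce seminormality of $Z$ to the graded comparison map $\bigoplus_{m\ge 0}H^0(X,L^m)\to \CC\oplus\bigoplus_{m\ge 1}H^0(X,\H^0\DB_X^0\otimes L^m)$ supplied by Theorem \ref{thm:DB-cones}(1), and then use Serre's theorems on the ample line bundle $L$ for the converse. The only difference is cosmetic: you rewrite $\H^0\DB_X^0$ as $\nu_*\O_{X^{\rm sn}}$ and spell out the Serre-vanishing/global-generation step that the paper leaves as ``straightforward to check.''
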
	
	\begin{proof}
		It is well known that a variety $X$ is seminormal if and only if the natural map $\O_X\to\H^0\DB^0_X$ is an isomorphism.
		
		In our case, the cone $Z$ is seminormal if and only if
		\[\varphi: \Gamma (Z, \O_Z) = \bigoplus_{m\ge 0} H^0(X, L^m) \to \Gamma (Z, \H^0\DB_Z^0) = \CC\oplus \bigoplus_{m\ge 1}H^0(X, \H^0\DB_X^0\otimes L^m)\]
		is an isomorphism. Hence it is clear that if $X$ is seminormal, then so is $Z$.
		
		Conversely, suppose $Z$ is seminormal. Since the map $\varphi$ preserves the grading given by $L$, we have isomorphisms
		\[H^0(X, L^m)\to H^0(X, \H^0\DB_X^0\otimes L^m)\] 
		for all $m\ge 1$, induced by the inclusion of sheaves $\O_X \hookrightarrow \H^0\DB_X^0$; using Serre's theorems, it is then straightforward to check 
		that this inclusion has to be an isomorphism.
		\end{proof}

Finally, when $L$ is very ample, one can also consider the classical cone $C(X)$ over the corresponding embedding in projective space. 
For future use, we record the following result that relates $C(X,L)$ to $C(X)$.
	
	\begin{lemma}\label{lemma:abstract-vs-classical-cones}
		Suppose $L = \O_X(1)$ is very ample, defining an embedding 
		$X \subset \PP^n$. Then for each $k$,
		\[\DB^k_{C(X)}\to \bR \pi_*\DB^k_{C(X,L)}\]
		is a quasi-isomorphism, where $\pi$ is the natural map $C(X,L)\to C(X)$. Consequently, $\lcdef(C(X))=\lcdef(C(X,L))$.
		\begin{proof}
			The isomorphism $\DB_{C(X)}^k\to \bR \pi_*\DB^k_{C(X,L)}$ follows from Lemma \ref{general-DB}(4), given that $\pi$ is an isomorphism away from the vertex points. The last assertion then follows from Corollary \ref{cor:depth-DB-complex}.

		\end{proof}
	\end{lemma}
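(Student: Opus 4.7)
The plan is to apply the discriminant-square machinery of Lemma \ref{general-DB}(4). Since $L$ is very ample, the complements $C(X,L)\smallsetminus \{x'\}$ and $C(X)\smallsetminus\{x\}$ are both canonically identified with the total space of $L^{\vee}$ with the zero section removed, and $\pi$ restricts to the identity on this common open set; in particular $\pi$ is finite and an isomorphism off the vertex points. Let $E:=\pi^{-1}(\{x\})$, a zero-dimensional subscheme of $C(X,L)$ with $E_{\mathrm{red}}=\{x'\}$. Then
\[\begin{tikzcd}
E\ar[r]\ar[d] & C(X,L)\ar[d, "\pi"]\\
\{x\}\ar[r] & C(X)
\end{tikzcd}\]
is a discriminant square, to which Lemma \ref{general-DB}(4) will apply.

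The resulting exact triangle, for each $k\ge 0$, reads
\[\DB^k_{C(X)}\to \DB^k_{\{x\}}\oplus \bR \pi_*\DB^k_{C(X,L)}\to \bR \pi_*\DB^k_{E}\xto{+1}.\]
Since $\DB^k_E\simeq \DB^k_{E_{\mathrm{red}}}=\DB^k_{\{x'\}}$ (hyperresolutions may be chosen to factor through the reduction), the outer terms both vanish for $k\ge 1$, yielding the desired quasi-isomorphism immediately. For $k=0$ the outer terms are both isomorphic (after pushforward) to $\underline{\CC}_x$, and the component of the second triangle map from $\DB^0_{\{x\}}$ is the identity by naturality of the Du Bois complex on the commutative square; this forces the triangle to split so as to again identify $\DB^0_{C(X)}$ with $\bR \pi_*\DB^0_{C(X,L)}$.

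For the concluding lcdef statement, I would invoke Corollary \ref{cor:depth-DB-complex}, which expresses $\lcdef(Z)=\dim Z-\min_k\{\depth\DB^k_Z+k\}$. One has $\dim C(X)=\dim C(X,L)=n+1$, so only the depths need to be matched. Since $\pi$ is finite, $\pi_*$ is exact and preserves local cohomology at closed points, i.e.\ $\HH^i_x(\pi_*\F)\simeq \HH^i_{x'}(\F)$ for any $\F\in D^b_{\mathrm{coh}}(C(X,L))$; combined with the first part, this shows that the depths of $\DB^k_{C(X)}$ at $x$ and of $\DB^k_{C(X,L)}$ at $x'$ agree for every $k$. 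Away from the vertices $\pi$ is an isomorphism, so depths agree pointwise everywhere, and the equality $\lcdef(C(X))=\lcdef(C(X,L))$ follows.

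The only mildly delicate step is the $k=0$ case of the discriminant triangle: one has to verify that the map $\DB^0_{\{x\}}\to \bR\pi_*\DB^0_{\{x'\}}$ induced by the square is the identity of $\underline{\CC}_x$, so that the triangle collapses. Everything else is formal bookkeeping given the formalism already developed in the paper.
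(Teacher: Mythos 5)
Your proposal is correct and follows essentially the same route as the paper: the quasi-isomorphism comes from the discriminant square for $\pi$ over the vertex via Lemma \ref{general-DB}(4) (with the outer point terms cancelling), and the equality of local cohomological defects comes from Corollary \ref{cor:depth-DB-complex} together with the compatibility $\bR\Gamma_x\circ\bR\pi_*=\bR\Gamma_{x'}$ for the finite map $\pi$. You have simply spelled out the details (notably the $k=0$ case and the depth bookkeeping away from the vertices) that the paper leaves implicit.
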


\subsection{Depth of Du Bois complexes of cones}
In this section we focus on the proof of Theorem \ref{thm:DB-cones}(2). In other words, we characterize the depth of $\DB_Z^k$, where $Z$ is the cone over a possibly singular projective variety $X$, in terms of cohomological data of the Du Bois complexes of $X$.

To do so, still using the notation of the previous section, we must first also understand the Du Bois complexes of the complement $U = Z \smallsetminus \{x\}$ of the vertex of the cone.  Note first that 
a completely similar argument as in Lemma \ref{lemma:DB-complex-cone-with-cone-point} gives: 

\begin{lemma}\label{lemma:DB-complex-cone-without-cone-point}
	For $U=\widetilde{Z} \smallsetminus E=Z \smallsetminus \{x\}$ we have
	\[\pi_*\DB^0_{U}\simeq \bigoplus_{m\in \ZZ} \DB_X^0\otimes L^m,\]
	and for each $k\ge 1$, a (not necessarily split) exact triangle
	\[\bigoplus_{m\in \ZZ} \DB_X^k\otimes L^m\to \pi_*\DB^k_U\to \bigoplus_{m\in \ZZ}\DB_X^{k-1}\otimes L^m\xto{+1}.\]
	\end{lemma}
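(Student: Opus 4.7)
The plan is to mirror the proof of \lemmaref{lemma:DB-complex-cone-with-cone-point}, adapting it to the punctured situation. First I would fix a hyperresolution $\epsilon_\bullet\colon X_\bullet\to X$; as in that lemma, setting $\widetilde Z_\bullet = \mathbf{Spec}_{X_\bullet}(\Sym \epsilon_\bullet^* L)$ gives a hyperresolution of $\widetilde Z$. Removing the zero sections $E_i\simeq X_i$ level by level yields a simplicial scheme
\[U_\bullet = \mathbf{Spec}_{X_\bullet}\Big(\bigoplus_{m\in\ZZ}\epsilon_\bullet^*L^m\Big),\]
with each $U_i\to X_i$ a $\GG_m$-bundle; this is a hyperresolution of $U$, since forming the complement of a smooth closed subscheme in each level preserves the hyperresolution structure.

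On each level the morphism $\pi\colon U_i\to X_i$ is affine and smooth of relative dimension $1$ with $\Omega^1_{U_i/X_i}\simeq \pi^*\epsilon_i^* L$. Taking wedge powers of the relative cotangent sequence and pushing forward using the projection formula together with $\pi_*\O_{U_i}\simeq \bigoplus_{m\in\ZZ}\epsilon_i^* L^m$ gives, for $k=0$, the isomorphism $\pi_*\O_{U_i}\simeq \bigoplus_{m\in\ZZ}\epsilon_i^* L^m$, and for $k\ge 1$ a short exact sequence
\[0\to \bigoplus_{m\in\ZZ}\Omega^k_{X_i}\otimes \epsilon_i^*L^m\to \pi_*\Omega^k_{U_i}\to \bigoplus_{m\in\ZZ}\Omega^{k-1}_{X_i}\otimes \epsilon_i^*L^m\to 0,\]
exactness using that $\pi$ is affine. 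These sequences are functorial in the face and degeneracy maps, so they assemble into a short exact sequence of simplicial sheaves on $X_\bullet$.

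Finally I would push forward via $\epsilon_{\bullet *}$ and use $\DB^k_U\simeq \bR\epsilon_{\bullet *}\Omega^k_{U_\bullet}$ together with the projection formula for line bundles on $X$ to extract the exact triangle on $X$ and the $k=0$ isomorphism. The main point of divergence from \lemmaref{lemma:DB-complex-cone-with-cone-point} is the loss of a canonical splitting: in the positive-grading case the formula $\omega\otimes s^m\mapsto \tfrac{1}{m}\,\omega\wedge d(s^m)$ (valid for $m\ge 1$) gives a splitting compatible with the simplicial structure, but extending the index set to $\ZZ$ introduces an $m=0$ summand on the right where $d(s^0)=0$ leaves no canonical lift (the naive candidate $\omega\wedge d\log s$ depending on the trivialization $s$). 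I expect this, together with bookkeeping of the hyperresolution, to be the only delicate point; everything else reduces to the level-wise computation and the identity $\DB^k\simeq \bR\epsilon_{\bullet *}\Omega^k_{\bullet}$.
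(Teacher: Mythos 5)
Your proposal is correct and follows essentially the same route as the paper, which itself proves this lemma by declaring it "a completely similar argument" to Lemma~\ref{lemma:DB-complex-cone-with-cone-point}: pass to the induced hyperresolution of $U$ by $\GG_m$-bundles, take the level-wise relative cotangent sequence, and push forward. Your diagnosis of where the splitting is lost --- the $m=0$ summand, where $\tfrac{1}{m}d(\omega t^m)$ is unavailable and the candidate lift depends on the trivialization --- matches exactly the computation the paper carries out afterwards in Lemma~\ref{lemma:zero-connecting-morphism}.
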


\smallskip	

Passing to hypercohomology, we obtain
\[\HH^i(U, \DB^0_U) \simeq \bigoplus_{m\in \ZZ}\HH^i(X, \DB_X^0\otimes L^m)\]
for $i\ge 0$, and for  $k\ge 1$, a long exact sequence
\[\cdots\to \bigoplus_{m\in \ZZ} \HH^i(\DB_X^k\otimes L^m)\to \HH^i(U, \DB^k_U)\to \bigoplus_{m\in \ZZ}\HH^i(\DB_X^{k-1}\otimes L^m)\xto{d_i} \bigoplus_{m\in \ZZ} \HH^{i+1}(\DB_X^k\otimes L^m)\to \cdots\]

\medskip

\smallskip

\begin{lemma}\label{lemma:zero-connecting-morphism}
     The differentials $d_i$ in the long exact sequence above are zero, except on the $m=0$ summand, where 
	\[\HH^i(X,\DB_X^{k-1})\xto{d_i} \HH^{i+1}(X,\DB_X^k)\]
	is induced by cup product with $c_1(\epsilon_j^*L)$ on each term $X_j$ in a hyperresolution of $X$.
		\begin{proof}
		For each $k$ and $m$, completely analogously to Lemma \ref{general-DB}(5), 
		we have a spectral sequence
		\[E_1^{q,p}=H^p (X_q, \Omega_{X_q}^k\otimes \epsilon_q^* L^m)\implies \HH^{p+q}(X, \DB_X^k\otimes L^m),\]
		by means of which the differentials $d_i$, with $i = p +q$,  are induced by the corresponding differentials 
		\[H^p (X_q, \Omega_{X_q}^{k-1}\otimes \epsilon_q^* L^m)\xto{d_p} H^{p+1}(X_q, \Omega_{X_q}^{k}\otimes \epsilon_q^* L^m)\]
		on each term in the hyperresolution.
		
		It suffices therefore to show the statement of the Lemma when $X$ is smooth, meaning that in this case 
		\[d_p\colon H^p (X, \Omega_{X}^{k-1}\otimes L^m)\to  H^{p+1}(X, \Omega_{X}^{k}\otimes L^m)\]
		is $0$ on the terms with $m\neq 0$, and cup product with $c_1 (L)$ on the terms with $m=0$.
		
		To this end, let $\omega\in H^p(X,\Omega_X^{k-1}\otimes L^m)$ be represented by the \v Cech $p$-cocycle \[\{\omega_{i_0i_1\cdots i_p} t_{i_0i_1\cdots i_p}^m\}_{i_0i_1\cdots i_p}\in \check{C}^p(X,\Omega^{k-1}_X\otimes L^m).\]
		where we fix an open cover $\{U_i=\Spec A_i\}_i$ over which $L=t_i A_i$ is locally free, and we denote by $t_{i_0i_1\cdots i_p}:=t_{i_0}|_{U_{i_0}\cap \cdots \cap U_{i_p}}$. To simplify notation, we will denote this restriction by $t_{i_0}$ as well.
		
		If $m\neq 0$, then $\omega$ lifts to a cocycle $\{\frac{1}{m} d(\omega_{i_0i_1\cdots i_p} t_{i_0}^m)\}_{i_0i_1\cdots i_p}\in \check{C}^p(\Omega^k_{\tilde{Z} \smallsetminus E})$, thus its image in $\check{C}^{p+1}(\Omega^k_{\tilde{Z}\smallsetminus E})$ is zero.

		If $m=0$, then $\omega$ lifts to $\{\frac{1}{t_{i_0i_1\cdots i_p}} d(\omega_{i_0i_1\cdots i_p} t_{i_0})\}_{i_0i_1\cdots i_p}\in\check{C}^p(\Omega^k_{\tilde{Z}\smallsetminus E})$. Using the cocycle condition  $\sum_j (-1)^j \omega_{i_0i_1\cdots \hat{i_j}\cdots i_pi_{p+1}} = 0$, one computes that
		\begin{align*}
			\varphi_p(\omega)  
			&=\{\omega_{i_1i_2\cdots i_{p+1}}\wedge \frac{dt_{i_1}}{t_{i_1}}+\sum_{j>0} (-1)^j  \omega_{i_0i_1\cdots \hat{i_j}\cdots i_pi_{p+1}}\wedge \frac{dt_{i_0}}{t_{i_0}}\}_{i_0i_1\cdots i_p i_{p+1}} \\
			&=\{\omega_{i_1i_2\cdots i_{p+1}}\wedge (\frac{dt_{i_1}}{t_{i_1}}-\frac{dt_{i_0}}{t_{i_0}})\}_{i_0i_1\cdots i_p i_{p+1}},
		\end{align*}
		which (up to sign) represents the class $\omega\cup \{d\log g_{ij}\}_{i,j}$, where $g_{ij}$ are the transition maps of $L$, satisfying $t_{i}=g_{ij}t_{j}$. Since the class of $d\log g_{ij}$ can be identified with $c_1 (L) \in H^2(X,\ZZ)\cap H^{1,1} (X)$ (see e.g. \cite[p.141]{GH}), we obtain the conclusion.
\end{proof}
\end{lemma}

\begin{remark}\label{rmk:compatibility}
It is well known that for line bundle $L$ on a (possibly singular) projective variety $X$, we have 
$$c_1 (L) \in F^1 H^2 (X, \CC) = H^2 (X,F^1 \DB_X^\bullet),$$
and therefore it defines an element in $\HH^1 (X, \DB_X^1)$; see e.g. \cite{AK} for a discussion. The differentials in Lemma \ref{lemma:zero-connecting-morphism} can be seen 
as cup product with this element; equivalently, they arise from the action of $c_1 (L)$ on singular cohomology
	$$\cdot \cup c_1 (L) \colon H^{i + k -1} (X, \CC) \to H^{i + k +1} (X, \CC),$$
via Lemma \ref{general-DB}(3). However, Lemma  \ref{lemma:zero-connecting-morphism} can also be taken to be a definition of this action.

When in addition $L$ is very ample, giving an embedding $\iota: X\hookrightarrow \PP^N$, there are induced morphisms of 
mixed Hodge structures $\iota^*_i: H^i(\PP^N,\CC)\to H^i(X,\CC)$, commuting with cup product, and  such that $\iota^*_2 (c_1(\O(1)))=c_1(L)$.
Thus the Lefschetz maps on $\PP^N$ and $X$ are compatible, in the sense that for each $i$ there is a commutative diagram
\[\begin{tikzcd}
	&H^i (\PP^N, \CC)\ar[r,"c_1(\O(1))"]\ar[d] &H^{i+2}(\PP^N, \CC)\ar[d] \\
	&H^i (X, \CC)\ar[r,"c_1(L)"] & H^{i+2 }(X, \CC).
\end{tikzcd}\]
\end{remark}

\medskip

As a consequence of the calculations above and of the results of the previous section, we obtain:

\begin{corollary}\label{cor:technical-isom}
With notation as above, we have:

\noindent
(i)  The maps $\HH^i(Z, \DB_Z^k)\to \HH^i(U, \DB_U^k)$ induced by restriction are injective for all $i,k$. 

\noindent
(ii)  The map $\HH^0(Z, \DB_Z^0)\to \HH^0(U, \DB_U^0)$ is an isomorphism if and only if
	\[\HH^0(X, \DB_X^0\otimes L^m)=0 \,\,\,\,\text{ for all } \,\,m\le -1.\]
For $i\ge 1$, the map $\HH^i(Z, \DB_Z^0)\to \HH^i(U, \DB_U^0)$ is an isomorphism if and only if
	\[\HH^i(X, \DB_X^0\otimes L^m)=0 \,\,\,\,\text{ for all } \,\,m\le 0.\]
	
\noindent
(iii) For $k\ge 1$, the map $\HH^i(Z, \DB_Z^k)\to \HH^i(U, \DB_U^k)$ is an isomorphism if and only if
	\begin{itemize}
		\item $\HH^i(X, \DB_X^k\otimes L^m)=\HH^i(X, \DB_X^{k-1}\otimes L^m)=0$ for $m\le -1$;
		\item  The differential
			\[\HH^{i-1}(X,\DB_X^{k-1})\xto{d_i} \HH^i(X,\DB_X^k)\text{ is surjective}\]
		and the differential
		\[\HH^i(X,\DB_X^{k-1})\xto{d_i} \HH^{i+1}(X,\DB_X^k)\text{ is injective}.\]
	\end{itemize}
\end{corollary}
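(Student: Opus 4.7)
The plan is to exploit the natural $\mathbb{G}_m$-action on $Z$ (with unique fixed point $x$), which lifts to $\widetilde Z$ and restricts to $U$, and under which $\pi\colon\widetilde Z\to X$ and $\pi|_U\colon U\to X$ are equivariant with trivial action on $X$. All hypercohomology groups in sight then acquire a $\ZZ$-grading preserved by the restriction map $\HH^i(Z,\DB_Z^k)\to\HH^i(U,\DB_U^k)$, and the summands indexed by $m$ appearing in Theorem \ref{thm:DB-cones}(1) and in the description of $\HH^i(U,\DB_U^k)$ coming from Lemma \ref{lemma:DB-complex-cone-without-cone-point} are precisely the weight-$m$ pieces. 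The argument then reduces to a weight-by-weight comparison.

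I would first argue that on each weight $m\ge 1$ the restriction is the identity, by comparing the two pushforward triangles in Lemmas \ref{lemma:DB-complex-cone-with-cone-point} and \ref{lemma:DB-complex-cone-without-cone-point} under the open inclusion $U\hookrightarrow\widetilde Z$: the vertical comparison maps are the evident inclusions of the $m\ge 0$ (resp.\ $m\ge 1$) summands into the $m\in\ZZ$ summands. Next, by Lemma \ref{lemma:zero-connecting-morphism} the differentials in the long exact sequence for $\HH^i(U,\DB_U^k)$ vanish on every weight $m\ne 0$, so on such weights the sequence splits off a direct summand $\HH^i(X,\DB_X^k\otimes L^m)\oplus\HH^i(X,\DB_X^{k-1}\otimes L^m)$. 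The only nontrivial weight is $m=0$ (and only for $k\ge 1$), where $\HH^i(U,\DB_U^k)_0$ sits in a long exact sequence whose differentials are cup product with $c_1(L)$.

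With this weight analysis in hand, part (i) is immediate: on each weight the restriction is either the identity (for $m\ge 1$), the inclusion of constants $\CC\hookrightarrow\HH^0(X,\DB_X^0)$ (only for $k=i=m=0$), or zero on a zero source. For (ii) and (iii), the map is an isomorphism precisely when all the ``extra'' weight pieces of $\HH^i(U,\DB_U^k)$ vanish: the $m\le -1$ summands give the first bullet in each case, while for (iii) the $m=0$ piece vanishes if and only if $d_{i-1}$ is surjective and $d_i$ is injective, by the very same long exact sequence---the second bullet.

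The one delicate step is the identification of the restriction map as the tautological identity on each weight $m\ge 1$, which amounts to chasing through the comparison of the two pushforward triangles at the level of hypercohomology. Beyond this, the argument is routine bookkeeping applied to the descriptions already established in Theorem \ref{thm:DB-cones}(1) and Lemma \ref{lemma:zero-connecting-morphism}.
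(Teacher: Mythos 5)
Your argument is correct and is essentially the proof the paper intends: the corollary is stated there as an immediate consequence of Theorem \ref{thm:DB-cones}(1), Lemma \ref{lemma:DB-complex-cone-without-cone-point} and Lemma \ref{lemma:zero-connecting-morphism}, via exactly the weight-by-weight comparison you describe (the grading by the $L^m$-summands being the $\GG_m$-weight grading, and the restriction map being the inclusion of the $m\ge 0$, resp.\ $m\ge 1$, summands). The only point worth making explicit is that in case (ii) with $i=0$ the weight-zero component $\CC\to\HH^0(X,\DB_X^0)$ is automatically surjective, since $X$ is a connected reduced projective variety, so the weight-zero piece imposes no extra condition.
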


We are finally ready to compute the depth at the Du Bois complexes of $Z$, which is crucial for the proof of Theorem \ref{thm:main-general}, but may also be of independent interest.

\begin{proof}[{Proof of Theorem \ref{thm:DB-cones}(2)}]
Recall that we denote $U = Z \smallsetminus \{x\}$. For each $k$, there is a long exact sequence
\[\cdots\to \HH^i_x(Z, \DB_Z^k) \to \HH^i(Z, \DB_Z^k) = H^0(Z, \H^i\DB_Z^k) \to \HH^i(U, \DB_U^k)\to \cdots.\]
Thus, the condition \[\depth_x \DB_Z^k>d \iff \HH_x^i(Z, \DB_Z^k)=0 \,\,\,\,\text{ for all }\,\,\,\, i\le d\]
is equivalent to the following two conditions:
\begin{enumerate}
	\item $\HH^i(Z, \DB_Z^k)\to \HH^i(U, \DB_U^k)$ is an isomorphism for $0\le i\le d-1$;
	\item $\HH^d(Z, \DB_Z^k)\to \HH^d (U, \DB_U^k)$ is injective.
\end{enumerate}
The assertion is then an immediate consequence of Corollary \ref{cor:technical-isom}.
\end{proof}

\begin{remark}[{\bf The case when $X$ is smooth}]
We make some remarks about the content of Theorem \ref{thm:DB-cones}(2) when $X$ is smooth, as  a blueprint 
for what we should aim for in the singular case. With this assumption, the statement translates into the fact, say when $k\ge 1$, that 
$\depth_x \DB_Z^k > d$ is equivalent to the following two conditions being satisfied simultaneously:

\smallskip

\noindent
(i) $H^i (X, \Omega_X^k \otimes L^{\otimes m}) = H^i (X, \Omega_X^{k-1} \otimes L^{\otimes m}) = 0$ for all $m \le - 1$ and $0 \le i \le d- 1$.

\smallskip

\noindent
(ii)  The maps $H^{k -1, i} (X) \to H^{k, i +1} (X)$ induced by cup product with $c_1 (L)$ are isomorphisms for $-1 \le i \le d -2$, and injective for $i = d -1$.

If $d \le n - k +2$, then  $i \le d -1$ implies $i + k < n = \dim X$, and therefore condition (i) holds automatically thanks to Nakano vanishing. The injectivity of all the maps in (ii) is also automatic, because of the Hard Lefschetz theorem. These observations and similar ones for $k=0$, in conjunction with Corollary 
\ref{cor:depth-DB-complex} and Remark \ref{rmk:compatibility}, essentially finish the proof of Theorem \ref{thm:main} when $X$ is smooth.

Thus when $X$ is singular we will need in particular a suitable replacement for the Nakano vanishing theorem. This is of independent interest, 
and is the topic of the next section.
\end{remark}

\subsection{Vanishing for dual Du Bois complexes}\label{scn:vanishing}
The Du Bois complexes of a projective variety $X$ of dimension $n$ are known by \cite[Theorem V.5.1]{GNPP} to satisfy the analogue of the Kodaira-Akizuki-Nakano vanishing theorem, 
in the sense that 
$$\HH^q (X, \DB_X^p \otimes L) = 0 \,\,\,\,\,\,{\rm for~all}\,\,\,\,p + q > n,$$
for any ample line bundle $L$ on $X$. The Serre dual statement for twists by $L^{-1}$ does not however hold automatically, as in general we only have 
morphisms $\DB_X^{n-p} \to \DD(\DB_X^p)$ that are not necessarily isomorphisms, where 
$$\DD(-)=\RHom(-,\omega_X^{\bullet})[-n]$$ 
is the (shifted) Grothendieck duality functor. Instead, the correct statement turns out to be the following, which is an expanded version of Theorem \ref{thm:intro-vanishing} in the Introduction.

\begin{theorem}\label{prop:dual-DB-vanishing}
	Let $X$ be a projective variety of dimension $n$, and $L$ an ample line bundle on $X$. Then
	\[\HH^q(X,\DD(\DB_X^p)\otimes L)=0 \quad \text{ for } \,\,\,\,q-p>\lcdef (X).\]
	Equivalently, 
	\[\HH^q(X, \DB_X^p \otimes L^{-1})=0 \quad \text{ for } \,\,\,\, p + q < n - \lcdef (X).\]
		\begin{proof}
		The second statement follows from the first by Grothendieck-Serre duality. 
		
	To check the first statement, fix an embedding $X\hookrightarrow \PP^N$ given by a sufficiently high power of $L$. Let $\QQ_{\PP^N}^H[N]$ denote the trivial Hodge module on $\PP^N$. Then, as in \cite[Proposition 13.1]{MP-LC}, the Du Bois complexes satisfies
		\[\DB_X^p \simeq \RHom_{\O_Y}(\Gr^F_{p-N}\DR_{\PP^N} i_*i^!\QQ_{\PP^N}^H[N],\omega_{\PP^N}^{\bullet})[p- N]\]
		Dualizing, we obtain
		\begin{align*}
			\DD(\DB_X^p) &= \RHom_{\O_X}(\DB_X^p,\omega_X^{\bullet})[-n]\\
			&\cong \RHom_{\O_{\PP^N}}(\DB_X^p,\omega_{\PP^N}^{\bullet})[-n]\\
			&\cong (\Gr^F_{p-N} \DR_{\PP^N} i_*i^! \QQ_{\PP^N}^H[N])[N-p-n].
		\end{align*}
		Consider now the spectral sequence
		\[E_2^{i,j}=\HH^i(X, \Gr^F_{p-N}\DR_{\PP^N}\H^j(i_*i^!\QQ_{\PP^N}^H[N])\otimes L) \implies \]
		\[\implies \HH^{i+j}(X, \Gr^F_{p-N}\DR_{\PP^N} (i_*i^!\QQ_Y^H[N])\otimes L).\]
				
		We have that
		\begin{itemize}
			\item $\H^j(i_*i^!\QQ_{\PP^N}^H [N]) = \H^j_X\O_{\PP^N}=0$ for $j>\lcd(X, \PP^N)$;
			\smallskip
			\item $\HH^i(X,  \Gr^F_{p-N}\DR_{\PP^N}\H^j(i_*i^!\QQ_{\PP^N}^H[N]) \otimes L)$=0 for $i>0$, by Saito's vanishing theorem 
			for mixed Hodge modules \cite[Scn.2.g]{Saito}.
		\end{itemize}
		It follows that $E_2^{i,j}=0$ if $i+j>\lcd(X,\PP^N)$. We conclude that 
		$$\HH^q(X, \DD(\DB_X^p)\otimes L)=0$$ 
		when $q-p+N- n>\lcd(X,\PP^N)$, or equivalently $q-p>{\rm lcdef}(X)$, as required. 
	\end{proof}
\end{theorem}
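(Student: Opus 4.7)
The plan is to reduce the two equivalent forms of the statement to one via Grothendieck-Serre duality (which commutes with tensoring by a line bundle), and then prove the first form by leveraging the description of $\DB_X^p$ as a graded piece of the de Rham complex of a mixed Hodge module supported on $X$, combined with Saito's vanishing theorem. This strategy is natural because $\lcdef(X)$ already has a Hodge-module interpretation via local cohomology, and the characterization from \cite[Proposition 13.1]{MP-LC} converts statements about Du Bois complexes into statements about $\iota_*\iota^!\QQ_{\PP^N}^H[N]$.

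First I would fix an embedding $\iota\colon X\hookrightarrow \PP^N$ by a sufficiently high power of $L$, and recall the identification
\[\DB_X^p \simeq \RHom_{\O_{\PP^N}}\!\bigl(\Gr^F_{p-N}\DR_{\PP^N}\iota_*\iota^!\QQ_{\PP^N}^H[N],\,\omega_{\PP^N}^{\bullet}\bigr)[p-N].\]
Dualizing this identity gives, up to shift,
\[\DD(\DB_X^p)\simeq \bigl(\Gr^F_{p-N}\DR_{\PP^N}\iota_*\iota^!\QQ_{\PP^N}^H[N]\bigr)[N-p-n],\]
so that the desired vanishing for $\HH^q(X,\DD(\DB_X^p)\otimes L)$ becomes a vanishing for the hypercohomology of $\Gr^F_{p-N}\DR_{\PP^N}(\iota_*\iota^!\QQ_{\PP^N}^H[N])\otimes L$ in appropriate degrees.

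Next I would analyze the latter through the spectral sequence
\[E_2^{i,j}=\HH^i\!\bigl(X,\,\Gr^F_{p-N}\DR_{\PP^N}\mathcal{H}^j(\iota_*\iota^!\QQ_{\PP^N}^H[N])\otimes L\bigr)\]
abutting to the hypercohomology of the full graded de Rham complex. Two vanishing inputs then collapse most of the $E_2$ page. On the one hand, $\mathcal{H}^j(\iota_*\iota^!\QQ_{\PP^N}^H[N])$ is, up to a bounded shift, the local cohomology sheaf $\mathcal{H}^j_X \O_{\PP^N}$, which vanishes for $j>\lcd(X,\PP^N)$ by definition. On the other hand, each $\mathcal{H}^j(\iota_*\iota^!\QQ_{\PP^N}^H[N])$ is a mixed Hodge module on $\PP^N$ supported on $X$, so Saito's vanishing theorem \cite[Scn.2.g]{Saito} forces $E_2^{i,j}=0$ for $i>0$ against the ample line bundle $L$. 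Therefore $E_2^{i,j}=0$ whenever $i+j>\lcd(X,\PP^N)$, and unwinding the shift $[N-p-n]$ translates this into $\HH^q(X,\DD(\DB_X^p)\otimes L)=0$ for $q-p+N-n>\lcd(X,\PP^N)$, which is precisely $q-p>\lcdef(X)$.

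The main obstacle I anticipate is bookkeeping: one has to confirm the Hodge-theoretic identification of $\DB_X^p$ from \cite{MP-LC} is correctly dualized (the $[p-N]$ and $[N-p-n]$ shifts, plus the use of $\omega_{\PP^N}^\bullet$ versus $\omega_X^\bullet$), and ensure that the various degree shifts in the spectral sequence combine to give the crisp bound $q-p>\lcdef(X)$ rather than a bound depending on the ambient $N$. Once these shifts are aligned, Saito's vanishing does the actual vanishing work essentially for free, and there is no need to assume $X$ has any specific class of singularities.
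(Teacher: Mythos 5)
Your proposal follows essentially the same route as the paper's proof: Grothendieck--Serre duality to reduce to the first statement, the identification of $\DB_X^p$ via \cite[Proposition 13.1]{MP-LC}, dualization, and the spectral sequence collapsed by the vanishing of $\H^j_X\O_{\PP^N}$ above $\lcd(X,\PP^N)$ together with Saito's vanishing theorem. The argument is correct and matches the paper step for step.
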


\subsection{Proof of the main result}
The following statement is the more general version of Theorem \ref{thm:main} promised in the Introduction, that applies to any abstract cone over $X$.

\begin{theorem}\label{thm:main-general}
	Let $X$ be an $n$-dimensional projective variety endowed with an ample line bundle $L$, and let $Z=C(X, L)$ be the associated cone over $X$. Then the following are equivalent:
	\begin{enumerate}
		\item $\lcdef(Z)\le c$
		\item $\lcdef (X)\le c$, and the ``Lefschetz" maps 
		$$H^i (X, \CC)  \to H^{i + 2} (X, \CC) $$ 
		given by cup product with $c_1 (L)$ 
		are isomorphisms for $-1\le i \le n-3 - c$ and injective for $i \le n-2 - c$, with the convention that $H^{-1} (X, \CC) = 0$.\footnote{In particular, all $H^i (X, \CC)$ in this range are $0$ for $i$ odd, and $1$-dimensional for $i$ even.}
\end{enumerate}
\end{theorem}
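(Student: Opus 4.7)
The plan is to translate condition $(1)$ via Corollary \ref{cor:depth-DB-complex}, which equates $\lcdef(Z) \le c$ with the system of depth bounds $\depth \DB_Z^k + k \ge \dim Z - c$ for every $k \ge 0$. The minimum defining $\lcdef(Z)$ runs over closed points of $Z$, so I would first split it into the cone point $x$ and the complement $U = Z \smallsetminus \{x\}$. Since $U$ is a Zariski-locally trivial $(\AA^1 \smallsetminus \{0\})$-bundle over $X$ (via the restriction of $\pi\colon \widetilde{Z} \to X$), a local K\"unneth computation for the Du Bois complex of a product with a smooth factor yields $\lcdef(U) = \lcdef(X)$. Thus $(1)$ becomes the conjunction of $\lcdef(X) \le c$ and
\[
\depth_x \DB_Z^k > n-c-k \qquad \text{for every } k \ge 0,
\]
where the inequalities with $k \ge n-c$ are automatic from the baseline $\depth_x \DB_Z^k \ge 1$ given by Theorem \ref{thm:DB-cones}(2).

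Next, for each $k \in [0, n-c-1]$ I would feed $\depth_x \DB_Z^k > n-c-k$ into Theorem \ref{thm:DB-cones}(2) with $d = n-c-k$. The resulting conditions fall into three types: $(a)$ vanishing $\HH^i(X, \DB_X^p \otimes L^m) = 0$ for $m \le -1$; $(b)$ untwisted vanishing of the shape $\HH^0(X, \DB_X^k) = 0$ or $\HH^i(X, \DB_X^0) = 0$; and $(c)$ isomorphism or injectivity statements for the differentials $\HH^i(X, \DB_X^{k-1}) \to \HH^{i+1}(X, \DB_X^k)$ of Lemma \ref{lemma:zero-connecting-morphism}. Granting $\lcdef(X) \le c$, every condition of type $(a)$ satisfies $i + p < n - c \le n - \lcdef(X)$, hence falls in the vanishing range of Theorem \ref{prop:dual-DB-vanishing} (applied with $L^{|m|}$ playing the role of the ample bundle), and becomes automatic.

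For the remaining conditions of types $(b)$ and $(c)$, I would invoke the $E_1$-degeneration of Lemma \ref{general-DB}(3), which identifies $\HH^q(X, \DB_X^p)$ with the Hodge graded piece $\Gr^p_F H^{p+q}(X, \CC)$, together with Remark \ref{rmk:compatibility}, which identifies the differentials of Lemma \ref{lemma:zero-connecting-morphism} with the induced maps of $\cup c_1(L)$ on these graded pieces. A filtered map $\alpha_j \colon H^j(X, \CC) \to H^{j+2}(X, \CC)$ shifting $F^\bullet$ by one is an isomorphism if and only if $(i)$ each induced map $\HH^{j-p}(X, \DB_X^p) \to \HH^{j-p+1}(X, \DB_X^{p+1})$ is an isomorphism for $0 \le p \le j$, $(ii)$ $\HH^{j+2}(X, \DB_X^0) = 0$, and $(iii)$ $\HH^0(X, \DB_X^{j+2}) = 0$ (the last two account for the two extremal graded pieces of $H^{j+2}$, which lie outside the image of $\alpha_j$); similarly $\alpha_j$ is injective iff the maps in $(i)$ are all injective. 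Setting $j = i + k - 1$, these three sub-conditions correspond term-by-term with types $(c)$, $(b)$ in the $\DB_X^0$-flavor, and $(b)$ in the $\HH^0$-flavor respectively. The range $d = n-c-k \ge 1$ translates precisely into $j \le n-c-3$ for isomorphism and $j = n-c-2$ for injectivity, i.e.\ into the Lefschetz intervals of $(2)$.

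The main obstacle will be the precise bookkeeping in this final translation: one must verify that the ranges dictated by $d = n-c-k$ across varying $k$ reassemble to the intervals $-1 \le i \le n-3-c$ and $i \le n-2-c$ in $(2)$, while handling cleanly the degenerate cases (small $n-c$, or extremal graded pieces that vanish trivially because $H^{-1} = 0$). Since each translation step above is reversible, establishing this match yields both directions of the equivalence simultaneously.
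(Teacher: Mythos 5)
Your proposal is correct and follows essentially the same route as the paper's proof: translate $\lcdef(Z)\le c$ via Corollary \ref{cor:depth-DB-complex} into depth bounds at the cone point (handling the complement as a $\CC^*$-bundle over $X$), apply Theorem \ref{thm:DB-cones}(2) with $d=n-c-k$, dispose of the twisted vanishing conditions using Theorem \ref{prop:dual-DB-vanishing} under the hypothesis $\lcdef(X)\le c$, and convert the remaining graded conditions into the Lefschetz statement via $E_1$-degeneration and Remark \ref{rmk:compatibility}. The only remaining work is the range bookkeeping you flag, which matches the paper's passage from conditions (a)--(d) to (d$'$).
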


	\begin{proof}
	First we note the inequality 
	\begin{equation}\label{eqn:lcd-ineq-2}
	\lcdef (X)\le \lcdef (Z).
	\end{equation}
	Indeed, we clearly have  $\lcdef (Z \smallsetminus \{x\}) \le \lcdef (Z)$, where $x \in Z$ is the cone point, while 
	$\lcdef (Z \smallsetminus \{x\}) = \lcdef (X)$, since $Z \smallsetminus \{x\}$ is a locally trivial $\CC^*$-bundle over $X$.
	
	Note now that by Corollary \ref{cor:depth-DB-complex}, we have
		\[{\rm lcdef}(Z)=\dim Z- \min_{k\ge 0} ~\{\depth \DB_Z^k + k \}.\]
	As a consequence, we have an equivalence between the following two conditions:
	\begin{enumerate}
		\item $\lcdef(Z)\le c$
		\item $\lcdef (X)\le c$, and 
		\[\depth_x \DB_Z^k\ge n+1-c-k\]
		holds for all $k \ge 0$.
	\end{enumerate}
Hence we focus on understanding when this last condition holds.
		By Theorem \ref{thm:DB-cones}(2), applied with $d=n-c-k$,\footnote{Note that the Proposition applies when $d\ge 1$, or equivalently $k\le n-c-1$.} it is in turn equivalent to the following conditions being satisfies simultaneously:
		\begin{enumerate}
			\item[{(a)}] $\HH^i(X, \DB_X^0)=0$ for $1\le i\le n-c-1$;
			\item[{(b)}] $\HH^0(X, \DB_X^k)=0$ for $1\le k\le n-c-1$;
			\item[{(c)}] $\HH^i(X, \DB_X^k\otimes L^m)=0$ for all $m\le -1$ and $i+k\le n-c-1$;
			\item[{(d)}] For $1\le k\le n-c-2$, the map $\HH^i(X, \DB_X^{k-1})\to \HH^{i+1}(X, \DB_X^k)$ is isomorphism when  $0\le i\le n-c-k-2$, and injective when $i=n-c-k-1$.
		\end{enumerate}
		The main point is now to note that by Theorem \ref{thm:intro-vanishing},  condition (c) is guaranteed to hold 
precisely when $\lcdef (X) \le c$.
		
On the other hand, condition (a), (b) and (d) taken together are easily seen to be equivalent to
	\begin{enumerate}
		\item[{(d$^{\prime}$)}]  For all $i,k\ge 0$, the map 
			$$\HH^i(X, \DB_X^k)\to \HH^{i+1}(X, \DB_X^{k+1})$$
			is an isomorphism when $-1 \le i+k\le n-c-3$
			and injective when $i+k=n-c-2$.	
\end{enumerate}
This in turn is equivalent to the condition on the Lefschetz maps in (2) in the statement of the Theorem, using Lemma \ref{general-DB}(3)
 and the fact that these maps are compatible with the differentials in Lemma \ref{lemma:zero-connecting-morphism}; see also Remark \ref{rmk:compatibility}.
\end{proof}
	
\begin{proof}[Proof of Theorem \ref{thm:main}]
In the classical setting, we have in addition that $X$ is embedded in some $\PP^N$, and $L = \O_X (1)$. By Lemma \ref{lemma:abstract-vs-classical-cones}, it suffices to prove Theorem \ref{thm:main} for $Z=C(X,L)$. 
The point is simply to observe that the Hodge-theoretic conditions in (2) in Theorem  \ref{thm:main-general} are in this case equivalent to the condition that 
the restriction maps are isomorphisms 
\begin{equation}\label{eqn:WL}
H^i (\PP^N, \CC) \overset{\simeq}{\longrightarrow} H^i (X, \CC) \,\,\,\,\,\,{\rm for ~all} \,\,\,\, i \le n - 1 - c.
\end{equation}

Note that the Lefschetz maps on $\PP^N$ and $X$ are compatible, as in the diagram in Remark \ref{rmk:compatibility}. Recall that the second condition in (2) in Theorem \ref{thm:main-general} says that the maps
$$H^i (X, \CC) \to H^{i+2} (X, \CC),$$
given by cup product with $c_1 (L)$, are isomorphisms for all $-1 \le i \le n-3 - c$, and injective for $i \le n-2 - c$.  This condition certainly  holds if  ($\ref{eqn:WL})$ is satisfied. (Injectivity holds for $i = n - 2 - c$, since in this case $H^i (X, \CC)$ must be $0$ or $1$-dimensional, while the Lefschetz map is nontrivial if $H^i (\PP^N, \CC)$ is so.)

Conversely, since $H^1 (X, \CC) = 0$, assuming that the bottom horizontal maps in the diagram are isomorphisms for $i \le n - 3 - c$, it is clear that  ($\ref{eqn:WL}$) holds.
\end{proof}

\section{Other applications}

Here we describe further consequences of our calculation of the Du Bois complexes of cones over singular varieties.

\subsection{Higher Du Bois singularities and Bott vanishing}
A natural strengthening of the notion of Du Bois singularities has been considered in \cite{MOPW}, \cite{JKSY}, \cite{MP-LC} and \cite{MP2} in the case of local complete intersections. The weaker condition of \emph{pre-$p$-Du Bois} singularities was defined in general in \cite{SVV}; it means that up to degree $p$ the Du Bois complexes have no higher cohomologies:
$$\DB_X^j \simeq \H^0 \DB_X^j \,\,\,\,\,\,{\rm for~all}\,\,\,\, j \le p.$$ 

Using Theorem \ref{thm:DB-cones}(1), we obtain a criterion for cones over singular varieties to be pre-$p$-Du Bois, extending \cite[Corollary 7.4]{SVV}.

\begin{corollary}\label{cor:pDB}
	Let $X$ be a projective variety endowed with an ample line bundle $L$, and let $Z=C(X,L)$ be the associated cone over $X$. Then $Z$ has pre-$p$-Du Bois singularities if and only if
	\[\HH^i(X, \DB_X^j \otimes L^m)=0 \quad \text{ for all } \,\, i,m\ge 1,~ j \le p.\]
\end{corollary}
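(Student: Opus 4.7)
The plan is to reduce the pre-$p$-Du Bois condition on $Z$ to a vanishing condition on global sections, and then read off the answer directly from Theorem \ref{thm:DB-cones}(1).

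First I would note that since $Z$ is affine, for every $j$ the quasi-coherent sheaves $\H^i\DB_Z^j$ are acyclic, so the condition $\DB_Z^j \simeq \H^0\DB_Z^j$ is equivalent to the vanishing $\Gamma(Z,\H^i\DB_Z^j)=0$ for all $i\ge 1$. Hence $Z$ has pre-$p$-Du Bois singularities if and only if
\[\Gamma(Z,\H^i\DB_Z^j)=0 \quad\text{for all } i\ge 1 \text{ and } 0\le j\le p.\]

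Next I would substitute the explicit formulas from Theorem \ref{thm:DB-cones}(1). For $j=0$ and $i\ge 1$ the global sections equal $\bigoplus_{m\ge 1}\HH^i(X,\DB_X^0\otimes L^m)$, while for $j\ge 1$ and $i\ge 1$ they split as
\[\bigoplus_{m\ge 1}\HH^i(X,\DB_X^j\otimes L^m)\oplus \bigoplus_{m\ge 1}\HH^i(X,\DB_X^{j-1}\otimes L^m).\]
Demanding that all these direct summands vanish as $j$ ranges over $0,1,\dots,p$ is clearly equivalent to requiring $\HH^i(X,\DB_X^j\otimes L^m)=0$ for all $i,m\ge 1$ and $0\le j\le p$; the ``shifted'' summand involving $\DB_X^{j-1}$ contributes no new conditions because the index $j-1$ is already covered when $j\ge 1$.

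There is no real obstacle here: the statement is essentially a bookkeeping consequence of Theorem \ref{thm:DB-cones}(1) combined with the affineness of $Z$. The only mild subtlety worth flagging is to make sure that, when translating from $\Gamma(Z,\H^i\DB_Z^j)$ to the pre-$p$-Du Bois condition, one correctly accounts for both summands appearing in the $j\ge 1$ formula and observes that their simultaneous vanishing in the range $0\le j\le p$ collapses to the single symmetric family of vanishings stated in the Corollary.
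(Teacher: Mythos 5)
Your proposal is correct and follows exactly the route the paper intends: the paper gives no separate proof of this corollary, presenting it as an immediate consequence of Theorem \ref{thm:DB-cones}(1) via the observation that on the affine variety $Z$ the vanishing of the coherent sheaves $\H^i\DB_Z^j$ for $i\ge 1$, $j\le p$ is detected on global sections. Your bookkeeping of the two summands in the $j\ge 1$ case and their collapse to the stated symmetric family of vanishings is accurate.
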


Following \cite[Section 2.3]{KT}, we say that a projective variety $X$ \emph{satisfies Bott vanishing} if 
\[H^i(X, \Omega_X^{[j]}\otimes A)=0\]
for all $j\ge 0$ and $i >0$, and any ample line bundle $A$ on $X$. Here  $\Omega_X^{[j]}$ denotes the reflexive differentials 
$(\Omega_X^{j})^{\vee \vee}$.

\begin{corollary}
Let $X$ be a projective variety with rational singularities. Then the following are equivalent:
\begin{enumerate}
\item $Z = C(X, L)$ has pre-$p$-Du Bois singularities for all ample line bundles $L$ and all $p$. 
\item $X$ has pre-$p$-Du Bois singularities for all $p$ and satisfies Bott vanishing.
\end{enumerate}
\end{corollary}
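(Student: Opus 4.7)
The plan is to apply Corollary \ref{cor:pDB} to translate (1) into a cohomological vanishing statement on $X$, use a Serre vanishing spectral sequence argument to force $X$ to be pre-$p$-Du Bois for all $p$, and then exploit the rational singularities hypothesis to identify $\H^0\DB_X^j$ with $\Omega_X^{[j]}$, at which point (1) becomes precisely Bott vanishing.

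By Corollary \ref{cor:pDB}, since every ample line bundle on $X$ is of the form $L^m$ for some ample $L$ and $m \ge 1$, condition (1) is equivalent to
\[\HH^i(X, \DB_X^j \otimes A) = 0 \quad \text{for all } i \ge 1,\ j \ge 0, \text{ and all ample } A \text{ on } X.\]
I would first show that this forces $\DB_X^j \simeq \H^0\DB_X^j$ for every $j$. Indeed, if $\H^k \DB_X^j \neq 0$ for some $k \ge 1$, then by Serre vanishing one can pick $A$ sufficiently positive so that $H^p(X, \H^q \DB_X^j \otimes A) = 0$ for all $p \ge 1$ and all $q$, and simultaneously $H^0(X, \H^k \DB_X^j \otimes A) \neq 0$. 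The hypercohomology spectral sequence
\[E_2^{p,q} = H^p(X, \H^q \DB_X^j \otimes A) \Longrightarrow \HH^{p+q}(X, \DB_X^j \otimes A)\]
then collapses to its $p=0$ column and produces $\HH^k(X, \DB_X^j \otimes A) = H^0(X, \H^k \DB_X^j \otimes A) \neq 0$, contradicting (1). So (1) implies $X$ is pre-$p$-Du Bois for all $p$.

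The remaining step, and the main technical obstacle, is the identification $\H^0\DB_X^j \simeq \Omega_X^{[j]}$ for $X$ with rational singularities. Both sheaves are torsion-free and coincide with $\Omega_X^j$ on the smooth locus; the task is to promote this to a global isomorphism using that $X$ is normal and Cohen-Macaulay, together with the hyperresolution description $\DB_X^j \simeq \bR\epsilon_{\bullet *}\Omega_{X_{\bullet}}^j$ and the reflexivity of $\H^0\DB_X^j$. Granted this identification, condition (1) becomes $H^i(X, \Omega_X^{[j]} \otimes A) = 0$ for all $i \ge 1$, $j \ge 0$, and $A$ ample, which is precisely Bott vanishing. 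The converse (2) $\Rightarrow$ (1) is then immediate: the pre-$p$-Du Bois hypothesis together with the identification reduce $\HH^i(X, \DB_X^j \otimes A)$ to $H^i(X, \Omega_X^{[j]} \otimes A)$, which vanishes by Bott vanishing.
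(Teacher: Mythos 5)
Your overall route is the same as the paper's: you translate condition (1) via Corollary \ref{cor:pDB} into the vanishing $\HH^i(X,\DB_X^j\otimes A)=0$ for all $i\ge 1$ and all ample $A$ (the translation is correct, since taking $m=1$ already sweeps out all ample line bundles), and your Serre-vanishing argument with the hypercohomology spectral sequence forcing $X$ to be pre-$p$-Du Bois is exactly the paper's argument for $(1)\Rightarrow(2)$. The direction $(2)\Rightarrow(1)$ is likewise handled the same way.

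The genuine gap is in the step you yourself single out as ``the main technical obstacle'': the identification $\H^0\DB_X^j\simeq\Omega_X^{[j]}$ when $X$ has rational singularities. The plan you sketch --- both sheaves agree with $\Omega_X^j$ on the smooth locus, so promote this to a global isomorphism using normality, Cohen--Macaulayness, and ``the reflexivity of $\H^0\DB_X^j$'' --- begs the question. Since $\Omega_X^{[j]}=j_*\Omega_{X_{\mathrm{sm}}}^j$ for $X$ normal, the assertion that $\H^0\DB_X^j$ is reflexive and agrees with $\Omega_X^{[j]}$ amounts to saying that every reflexive $j$-form (a form on the smooth locus) lifts without poles to a resolution; this is precisely the extension theorem for differential forms, which is the main result of Kebekus--Schnell \cite{KS}, is genuinely deep, and fails without the rational singularities hypothesis. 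There is no elementary torsion-free/reflexive-sheaf argument that produces it. The paper's proof handles this step in one line by citing \cite{KS}; your write-up needs to do the same (or reproduce that theorem), rather than suggest it can be derived from general nonsense about normal Cohen--Macaulay varieties. With that citation in place, the rest of your argument goes through and coincides with the paper's.
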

\begin{proof}
Due to the main result of \cite{KS}, when $X$ has rational singularities we have $\H^0 \DB_X^j \simeq \Omega_X^{[j]}$ for all $j$.
Thus (1) follows from (2) by Corollary \ref{cor:pDB}. Conversely, suppose first that $X$ does not have pre-$p$-Du Bois singularities, meaning
$\H^i\DB_X^j\neq 0$ for some $i>0$ and $j \le p$. By Serre's theorems, we can choose $L$ sufficiently positive, so that $H^s(X,\H^t\DB_X^j\otimes L)=0$ for all $s>0$ and $t\ge 0$, and $H^0(X,\H^i\DB_X^j\otimes L)\neq 0$. Then, the spectral sequence
\[E_2^{s,t}=H^s(X,\H^t\DB_X^j\otimes L)\implies \HH^{s+t}(X,\DB_X^j\otimes L)\]
degenerates, and gives
\[\HH^i(X, \DB_X^j \otimes L)\simeq H^0(X,\H^i\DB_X^j\otimes L)\neq 0.\]
This is a contradiction, again by Corollary \ref{cor:pDB}, which also implies that $X$ satisfies Bott vanishing once we know 
that it is pre-$p$-Du Bois. 
\end{proof}

\begin{remark}
For varieties whose singularities are not rational, it may in fact make sense to define the Bott vanishing condition in terms of $\H^0 \DB_X^j$ rather than
$\Omega_X^{[j]}$. The statement of the Corollary, minus the rational singularities assumption, would remain unchanged. 

For example, one can show along the lines of \cite{KT} that if $X$ is a variety with int-amplified endomorphisms, then for every ample line bundle $A$ on $X$, we have
\[H^i(X,\H^j\DB_X^k\otimes A)=0\]
for $i>0$ and $j,k\ge 0$.
\end{remark}

\subsection{$K$-groups of cones}\label{scn:K}
Another consequence of Theorem \ref{thm:DB-cones} is the computation of the non-positive $K$-groups of abstract cones over a projective variety in terms of its Du Bois complexes, following the approach in \cite{CHWW}  for classical cones over smooth varieties, and using the main technical result of that paper. Please see the introduction of \emph{loc. cit.} for a discussion of the problem, and of the important techniques introduced by the authors in a series of papers dedicated to the study of $K$-groups.

As in \cite{CHWW-Bass}, for any subfield $F\subset \CC$ and complex variety $X$, one can define the $p$-th Du Bois complex of $X$ over $F$ as
\[\DB_{X/F}^p=\bR \epsilon_{\bullet*}\Omega^p_{X_{\bullet}/F}\]
where $\epsilon_{\bullet}: X_{\bullet}\to X$ is a hyperresolution of $X$. The formula for the Du Bois complexes of cones in Theorem \ref{thm:DB-cones} remains valid in this setting, with the same proof where $\DB_{X}^p$ is replaced by $\DB_{X/F}^p$. Moreover, 
one checks exactly as in the slightly more restrictive \cite[Lemma 2.1]{CHWW-Bass} that  there is an isomorphism
\[ \DB_{X/F}^p \simeq \Omega^p_{cdh,X/F},\]
where  $\Omega^p_{cdh,X/F}:=\bR a_* a^* \Omega^p_{X/F}$ is the sheafification of $\Omega_{X/F}^p$ in the cdh-topology, the map $a: X_{cdh}\to X_{zar}$ being the natural change-of-topology morphism.

\medskip

We are now ready to describe the calculation of non-positive $K$-groups of cones.

\begin{proof}[{Proof of Corollary \ref{cor:K-groups}}]
By \cite[Thereom 1.2]{CHWW},  for the affine cone $Z=C(X,L)$ over the $n$-dimensional projective variety $X$, we have

\begin{itemize}
	\item $K_0(Z) \simeq \ZZ\oplus \bigoplus_{i = 1}^{\dim Z-1} \Gamma(\H^i\DB_{Z/\QQ}^i)/d\big(\Gamma(\H^i\DB_{Z/\QQ}^{i-1})\big)$;
	\smallskip
	\item $K_{-\ell}(Z) \simeq \Gamma (Z, \H^{\ell} \DB_{Z/\QQ}^0) \oplus \bigoplus_{i=1}^{n-\ell} \Gamma (Z, \H^{\ell+i}\DB_{Z/\QQ}^i)/d\big(\Gamma (Z, \H^{\ell+i}\DB_{Z/\QQ}^{i-1})\big)$,  $\ell \ge 1$.
\end{itemize}
Here $d$ is the action on global sections of the differential $\H^{\ell+i}\DB_{Z/\QQ}^{i-1} \to \H^{\ell+i}\DB_{Z/\QQ}^{i}$.

Using the calculation of Du Bois complexes of abstract cones in Theorem \ref{thm:DB-cones}(1)  (in place of 
\cite[Corollary 2.10]{CHWW}) then yields
\begin{itemize}
	\item $K_0(Z)\simeq \ZZ\oplus \bigoplus_{i=1}^n \bigoplus_{m\ge 1} \HH^i(X, \DB_{X/\QQ}^i\otimes L^m)$;
	\smallskip
	\item $K_{-\ell}(Z)\simeq \bigoplus_{i=0}^{n-\ell}\bigoplus_{m\ge 1} \HH^{\ell+i}(X,\DB_{X/\QQ}^i\otimes L^m)$, $\ell \ge 1$.
\end{itemize}

Note that if $X$ is embedded in $\PP^N$ by $L = \O_X(1)$, one can consider the classical cone $C(X)$, and the method in 
\textit{loc. cit.} applies to this as well. Thus for $\ell \ge 1$ we have 
$$K_{-\ell}(C(X))=K_{-\ell}(C(X,L))$$ 
thanks to Lemma \ref{lemma:abstract-vs-classical-cones}. For $\ell=0$, we have
\[K_0(C(X))\simeq \ZZ\oplus \Pic(C(X))\oplus \bigoplus_{i=1}^n \bigoplus_{m\ge 1} \HH^i(X, \DB_{X/\QQ}^i\otimes L^m)\] 
where $\Pic(C(X)) \simeq R^+ / R$; here $R$ is the homogeneous coordinate ring of $X$, and 
$$R^+ \simeq \bigoplus_{m\ge 0} \HH^0(X, \DB_X^0\otimes L^m)$$
is its seminormalization. The last statement follows from Theorem \ref{thm:DB-cones}(1) and the well-known fact that the structure sheaf 
$\O_{Z^{\rm sn}}$ of the seminormalization of $Z$ can be identified with $\H^0 \DB_Z^0$.
For the statement about $\Pic( C(X))$ we are using \cite[Proposition 1.5]{CHWW}.
\end{proof}


\begin{remark}
	As a consequence, $K_{-\ell} (Z) = 0$ for $l> n +1$, where as usual $n = \dim X$. This vanishing result holds in greater generality, see \cite[Theorem 6.2]{CHSW} and \cite[Theorem B]{KST}.  Moreover, if $X$ is a complex algebraic variety with Du Bois singularities, then
	$$K_{-n -1} (Z) \simeq \bigoplus_{m=1}^{\infty} H^{n}(X, L^m),$$
	as in the case when $X$ is smooth. More generally, if the complex algebraic variety $X$ has pre-$p$-Du Bois singularities (see the previous section), the formula for $K_{-\ell} (Z)$ essentially coincides with that in the smooth case for $\ell \ge n-p$, since in this case we have that $ \H^0 \DB_{X/\QQ}^k \simeq  \DB_{X/\QQ}^k$ for $k \le p$; 
	indeed, this is the case by definition for $\DB_X^k = \DB_{X/\CC}^k$, and the statement over $\QQ$ follows inductively from this via exact triangles of the type
\[\Omega_{\CC/\QQ}^1\otimes_{\QQ} \DB_{X/\QQ}^{p-1}\to \DB_{X/\QQ}^p\to \DB^p_{X/\CC}\xto{+1}\]
obtained from the corresponding exact sequences at the level of K\" ahler differentials on a hyperresolution.	
	Note that when $X$ has rational singularities we have $ \H^0 \DB_X^k \simeq \Omega_X^{[k]}$ for all $k$, while if $X$ is a $p$-Du Bois local complete intersection, then $ \H^0 \DB_X^k \simeq \Omega_X^k$ for $k \le p$; see \cite{MP-LC} for a general discussion.
\end{remark}

\begin{remark}[{\bf Higher $K$-groups}]
With the same notation as above, 
according to \cite[Theorem 1.13]{CHWW}, for $l\ge 1$ the group $K_l(Z)$ breaks up into a direct sum of eigenspaces of the Adams operator, given by:
\begin{itemize}
	\item $K_l^{(i)}(Z)\cong HC_{l-1}^{(i-1)}(R)$ for $0<i<l$;
	\item $K_l^{(l)}(Z)\cong K_l^{(l)}(\CC) \oplus \tors \Omega_{Z/\QQ}^{l-1}/d\tors \Omega_{Z/\QQ}^{l-2}$;
	\item $K_l^{(l+1)}(Z)\cong \coker(\Gamma(Z,\H^0\DB_{Z/\QQ}^{l-1})\xto{d} \Gamma(\H^0\DB_{Z/\QQ}^l/\Omega_{Z/\QQ}^l))$;
	\item $K_l^{(i)}(Z)\cong \coker(\Gamma(\H^{i-l-1}\DB_{Z/\QQ}^{i-2})\xto{d}\Gamma(\H^{i-l-1}\DB_{Z/\QQ}^{i-1}))$ for $i\ge l+2$. 
\end{itemize}

For $i\ge l+2$, Theorem \ref{thm:DB-cones}(1) and a similar argument as in the computation of non-positive $K$-groups gives
\[K_l^{(i)}(Z)\simeq \bigoplus_{m\ge 1}\HH^{i-l-1}(X,\DB_{X/\QQ}^{i-1}\otimes L^m).\]
Still using Theorem \ref{thm:DB-cones}, further explicit calculations can be performed for special choices of $X$; these will appear later.
\end{remark}



\begin{thebibliography}{99}
\bibitem[AK]{AK}
D. Arapura and S. Kang, \emph{K\"{a}hler-de {R}ham cohomology and {C}hern classes}, Comm. Algebra \textbf{39} (2011), no. 4, 1153--1167.

\bibitem[BBLSZ]{BBLSZ}
B. Bhatt, M. Blickle, G. Lyubeznik, A. K. Singh, and W. Zhang, \emph{Applications of perverse sheaves in commutative algebra}, 
preprint arXiv:2308.03155.



\bibitem[CHSW]{CHSW}
G. Cortin\~ as, C. Haesemeyer, M. Schlichting, and C. Weibel, \emph{Cyclic homology, cdh-cohomology and negative {$K$}-theory}, Ann. of Math. \textbf{167} (2008), no. 2, 549--573.



\bibitem[CHWW1]{CHWW-Bass}
G. Cortin\~ as, C. Haesemeyer, M. E. Walker, and C. Weibel, \emph{A negative answer to a question of {B}ass}, Proc. Amer. Math. Soc. \textbf{139} (2011), no. 4, 1187--1200.

\bibitem[CHWW2]{CHWW}
G. Cortin\~ as, C. Haesemeyer, M. E. Walker, and C. Weibel, \emph{K-theory of cones of smooth varieties}, J. Algebraic Geom. \textbf{22} (2013), no. 1, 13--34.


\bibitem[DT]{DT}
H. Dao and S. Takagi, \emph{On the relationship between depth and cohomological dimension}, Compositio Math \textbf{152} (2016), 876--888.


\bibitem[DB]{DuBois}
P. Du Bois, \emph{Complexe de de Rham filtr\' e d’une vari\' et\' e singuli\` ere}, Bull. Soc. Math. France \textbf{109} (1981), no. 1, 41--81. 

\bibitem[FY]{FY}
H.-B. Foxby and S. Iyengar, \emph{Depth and amplitude for unbounded complexes}, Commutative algebra (Grenoble/Lyon, 2001), 119--137, Contemp. Math.  \textbf{331}, Amer. Math. Soc., Providence, RI, 2003.



\bibitem[GH]{GH}
P. Griffiths and J. Harris, \emph{Principles of algebraic geometry}, Wiley-Interscience, John Wiley $\&$ Sons, New York, 1978. xii+813 pp. 

\bibitem[GNPP]{GNPP}
F. Guill\'en, V. Navarro Aznar, P. Pascual Gainza, and F. Puerta, \emph{Hyperr\'esolutions cubiques et descente cohomologique}, Lecture Notes in Mathematics, vol. 1335, Springer-Verlag, Berlin, 1988. Papers from the Seminar on Hodge-Deligne Theory held in Barcelona, 1982.

\bibitem[HP]{HP}
R. Hartshorne and C. Polini, \emph{Simple D-module components of local cohomology modules}, J. Algebra \textbf{571} (2021), 232--257.

\bibitem[JKSY]{JKSY}
S.-J. Jung, I.-K. Kim, M. Saito, and Y. Yoon, \emph{Higher Du Bois singularities of hypersurfaces}, Proc. Lond. Math. Soc. \textbf{125} (2022), no. 3, 543--567.

\bibitem[KST]{KST}
M. Kerz, F. Strunk, and G.Tamme, \emph{Algebraic {$K$}-theory and descent for blow-ups}, Invent. Math. \textbf{211} (2018), no. 2, 523--577.

\bibitem[KT]{KT}
T. Kawakami and B. Totaro, \emph{Endomorphisms of varieties and Bott vanishing}, preprint arXiv:2302.11921.

\bibitem[KS]{KS}
S. Kebekus and C. Schnell, \emph{Extending holomorphic forms from the regular locus of a complex space to a resolution of singularities}, J. Amer. Math. Soc. \textbf{34} (2021), no. 2, 315--368.

\bibitem[La]{Lazarsfeld}
R. Lazarsfeld, \emph{Positivity in algebraic geometry I},  
Ergebnisse der Mathematik und ihrer Grenzgebiete \textbf{49}, Springer-Verlag, Berlin, 2004.



\bibitem[MOPW]{MOPW}
Mircea Musta\c{t}\u{a}, Sebasti\'{a}n Olano, Mihnea Popa, and Jakub Witaszek, \emph{The {D}u {B}ois complex of a hypersurface and the minimal exponent}, Duke Math. J. \textbf{172} (2023), no. 7, 1411--1436.


\bibitem[MP1]{MP-LC}
M. Musta\c{t}\u{a} and M. Popa, \emph{Hodge filtration on local cohomology, {D}u {B}ois complex and local
  cohomological dimension}, Forum Math. Pi  \textbf{10} (2022), Paper No. e22, 58.

\bibitem[MP2]{MP2}
M. Musta\c t\u a and M. Popa, \emph{On k-rational and k-{D}u {B}ois local complete intersections}, preprint arXiv:2207.08743, to appear in Algebraic Geometry.



\bibitem[Og]{Ogus}
A. Ogus, \emph{Local cohomological dimension of algebraic varieties}, Ann. of Math. \textbf{98}  (1973), no. 2, 327--365.

\bibitem[PS]{PS}
C. Peters and J. Steenbrink, \emph{Mixed Hodge structures}, Ergebnisse der Mathematik und ihrer Grenzgebiete. 3. Folge, vol. 52, Springer-Verlag, Berlin, 2008.

\bibitem[RSW]{RSW}
T. Reichelt, M. Saito, and U. Walther,  \emph{Topological calculation of local cohomological dimension}, J. Singul. \textbf{26} (2023), 13--22.

\bibitem[Sa]{Saito}
M. Saito, \emph{Mixed Hodge modules}, Publ. Res. Inst. Math. Sci. \textbf{26} (1990), no. 2, 221--333.

\bibitem[SVV]{SVV}
W. Shen, S. Venkatesh, and A.~D. Vo,  \emph{ On $k$-Du Bois and $k$-rational singularities}, preprint arXiv:2306.03977.





\end{thebibliography}
\end{document}